\documentclass[12pt]{amsart}
\usepackage{amscd}
\usepackage{amssymb,amsmath}
\usepackage{hyperref}
\usepackage{mathrsfs}
\usepackage{graphicx}
\usepackage{enumitem}
\usepackage{romannum}
\usepackage{mathtools}
\usepackage[utf8]{inputenc}
\usepackage{listings}

\usepackage{lipsum,eso-pic,xcolor}
\usepackage{lineno}
\usepackage{tikz}
\usetikzlibrary{arrows,decorations.pathmorphing,backgrounds,positioning,fit}
\usetikzlibrary{positioning}
\hypersetup{
	colorlinks=true,
	linkcolor=blue,}

\usetikzlibrary{trees}
\usetikzlibrary{arrows}

\def\v{\operatorname{v}}

\def\supp{\operatorname{supp}}

\def\deg{\operatorname{deg}}
\def\Min{\operatorname{Min}}
\def\Ass{\operatorname{Ass}}
\def\max{\operatorname{max}}

\def\Emb{\operatorname{Emb}}

\newtheorem{lemma}{Lemma}[section]
\newtheorem{corollary}[lemma]{Corollary}
\newtheorem{theorem}[lemma]{Theorem}

\newtheorem{definition}[lemma]{Definition}
\newtheorem{remark}[lemma]{Remark}

\advance\headheight1.15pt

\begin{document}
	
	\pagenumbering{arabic}
	
	\title[v-number]{V-numbers of symbolic-ordinary discrepancy modules} 
    
	\author[Manohar Kumar]{Manohar Kumar$^*$}
	\address{Department of Mathematics, Indian Institute of Technology Madras, Chennai, INDIA - 600036.}
	\email{manhar349@gmail.com}

\author[N. C. Minh]{N. C. Minh }
	\address{Faculty of Mathematics and Informatics, Hanoi University of Science and Technology, Vietnam}
	\email{minh.nguyencong@hust.edu.vn}

\author{Thanh Vu}
\address{Institute of Mathematics, VAST, 18 Hoang Quoc Viet, Hanoi, Vietnam}
\email{vuqthanh@gmail.com}

	\thanks{AMS Classification 2020: 13D02, 05E40, 05E99, 13D45}
	\thanks{$^*$ Corresponding author}

	\maketitle
	
	\begin{abstract}
		In this paper, we study the ${\rm v}$-number of the symbolic-ordinary discrepancy modules $M_t(I) = I^{(t)} / I^t$ over standard graded Noetherian rings. We show that this coincides with the ${\rm v}$-number of $I^t$ at embedded primes. Moreover, under mild assumptions on $I$, we show that the ${\rm v}$-number of $N_t(I) = I^{(t)} / I^{(t+1)}$ is the same as the ${\rm v}$-number of $I^{(t+1)}$. Furthermore, we provide combinatorial formulas for these ${\rm v}$-numbers when $t = 2, 3$ and $I$ is the edge ideal of a graph. Consequently, we classify all graphs for which ${\rm v}(I^2) = {\rm v}(I^{(2)})$.
	\end{abstract}
\section{Introduction}
Let $R$ be a standard graded Noetherian ring with $R_0$ a field, and let $M$ be a finitely generated graded $R$-module. For an associated prime $P$ of $M$, the \textit{local $\v$-number} of $M$ at $P$ is defined by
\[
\v_P(M) = \min\{\deg m \mid m \in M,\ \operatorname{ann}_R(m) = P\},
\]
and the \textit{$\v$-number} of $M$ is
\[
\v(M) = \min\{\v_P(M) \mid P \in \operatorname{Ass}(M)\}.
\]
The notion of $\v$-number was originally introduced for ideals by Cooper, Seceleanu, Tohaneanu, Vaz Pinto, and Villarreal \cite{cstvv20} in connection with the minimum distance function of projective Reed--Muller-type codes. Since then, the invariant has been studied from both algebraic and geometric perspectives and has found applications in several areas of commutative algebra. We refer the reader to \cite{chjv26,c24,fs25,fs26,kns25} for further developments in this direction. In particular, recent work has established asymptotic linearity of $\v$-functions associated to powers of ideals and to graded (co)homology modules; see, for example, \cite{fg25,gp25}. We will use the module-theoretic formulation above throughout the paper.

Let $I$ be a graded ideal of $R$. For $t \ge 1$, consider the ordinary and symbolic powers $I^t$ and $I^{(t)}$. The comparison between these two filtrations is a central theme in the study of powers of ideals. To capture these underlying structural differences, one considers two naturally associated graded modules
\[
M_t = I^{(t)} / I^t \qquad\text{and}\qquad N_t = I^{(t)} / {I^{(t+1)}}.
\]
The first, which we call the \textit{discrepancy module}, measures precisely the difference between the ordinary and symbolic powers (\cite{hm26}). In particular, its associated primes reflect the embedded primes that arise in the ordinary power $I^t$. Consequently, the $\v$-number of $M_t$ provides a numerical measure of this discrepancy and allows one to study the difference between the $\v$-numbers of $I^t$ and $I^{(t)}$ through a module supported on the embedded-prime locus. The second module, $N_t$, is the $t$-th graded piece of the associated graded object of the symbolic-power filtration and therefore records the successive layers of the symbolic powers. Its $\v$-number is consequently related to the local behavior of the symbolic powers at their minimal primes.

The study of these modules is motivated in part by the recent work of Chau, Hà, Jayanthan, and Vu \cite{chjv26}, which initiated the investigation of the relationship between the $\v$-numbers of ordinary and symbolic powers of squarefree monomial ideals. Their results suggest that discrepancy modules provide a natural framework for understanding this relationship. In this paper, we develop this viewpoint systematically by studying the $\v$-numbers of $M_t$ and $N_t$.

Our main results can be summarized as follows:
\begin{enumerate}
    \item We determine the associated primes of the discrepancy modules $M_t$ (Theorem~\ref{thm_ass_M}) and the successive symbolic-power quotients $N_t$ (Theorem~\ref{thm_ass_N}), under mild hypotheses on $I$.
    \item We establish reductions that express the local $\v$-numbers of $M_t$ in terms of the local $\v$-numbers of $I^t$ at embedded associated primes (Theorem~\ref{thm_v_M}). Similarly, under suitable hypotheses, the local $\v$-numbers of $N_t$ are determined by those of the symbolic powers $I^{(t+1)}$ at minimal primes (Theorem~\ref{thm_v_N}).
    \item When $I = I(G)$ is the edge ideal of a graph $G$, we obtain explicit combinatorial formulas for the $\v$-numbers of the discrepancy modules $M_2(I(G))$ (Theorem~\ref{thm_v_2}) and $M_3(I(G))$ (Theorem~\ref{thm_v_3}).
    \item As applications of these formulas, we obtain graph-theoretic characterizations of when the $\v$-numbers of the second ordinary and symbolic powers coincide (Theorem~\ref{thm_equality_v_2}).
\end{enumerate}

The paper is organized as follows. In Section~\ref{sec_assoc}, we study the associated primes of the modules $M_t$ and $N_t$. In Section~\ref{sec_v}, we establish the reductions for their local and global $\v$-numbers. In Section~\ref{sec_edge}, we specialize to edge ideals of graphs and derive combinatorial formulas for $M_2(I(G))$ and $M_3(I(G))$, together with their applications.

\section{Associated primes of discrepancy modules}\label{sec_assoc}
In this section, we prove general results regarding the associated primes of $M_t$ and $N_t$. 

For an ideal $I$, the $t$-th symbolic power of $I$ is defined as
\[
I^{(t)} = \left( \bigcap_{P \in \operatorname{Ass}(R/I)} I^t R_P \right) \cap R.
\]
The discrepancy module $M_t = I^{(t)} / I^t$ arises naturally when comparing the invariants of $I^t$ with those of $I^{(t)}$. Similarly, the module $N_t = I^{(t)} / I^{(t+1)}$ corresponds to the $t$-th graded component of the associated graded ring of the symbolic Rees algebra of $I$. We denote by $\operatorname{Emb}_t(I) = \operatorname{Ass}(R/I^t) \setminus \operatorname{Min}(I)$ the set of embedded primes of $I^t$.

We first establish a general result regarding the associated primes of the discrepancy module $M_t$.

\begin{theorem}\label{thm_ass_M} 
Let $R$ be a Noetherian ring and let $I$ be a proper ideal of $R$. Assume that $I$ has no embedded primes. Then:
\begin{enumerate}
    \item $I^t = I^{(t)}$ if and only if $I^t$ has no embedded primes.
    \item $\operatorname{Ass}(M_t) = \operatorname{Emb}_t(I)$.
\end{enumerate}    
\end{theorem}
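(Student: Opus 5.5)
The plan is to work directly with a primary decomposition of $I^t$ and to use the standard short exact sequence relating $M_t$, $R/I^t$ and $R/I^{(t)}$. The first observation is that $\sqrt{I^t}=\sqrt{I}$, so $\operatorname{Min}(I^t)=\operatorname{Min}(I)$. Because $I$ has no embedded primes, $\operatorname{Ass}(R/I)=\operatorname{Min}(I)$. Hence the definition of $I^{(t)}$ reads
\[
I^{(t)}=\bigcap_{P\in\operatorname{Min}(I)}\bigl(I^tR_P\cap R\bigr).
\]
Fix an irredundant primary decomposition
\[
I^t=Q_1\cap\cdots\cap Q_r\cap Q_1'\cap\cdots\cap Q_s',
\]
where the $Q_i$ are $P_i$-primary with $\{P_1,\dots,P_r\}=\operatorname{Min}(I)$, and the $Q_j'$ have embedded radicals. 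For a minimal prime $P_i$, localizing at $P_i$ kills every component whose radical is not contained in $P_i$. By minimality this leaves only $Q_i$, so $I^tR_{P_i}\cap R=Q_i$. Therefore $I^{(t)}=Q_1\cap\cdots\cap Q_r$. In particular $I^{(t)}$ is an intersection of primary ideals with minimal radicals, so $\operatorname{Ass}(R/I^{(t)})\subseteq\operatorname{Min}(I)$.

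For part (1), suppose first that $I^t$ has no embedded primes. Then $s=0$ and $I^t=Q_1\cap\cdots\cap Q_r=I^{(t)}$. Conversely, if $I^t=I^{(t)}$, then
\[
\operatorname{Ass}(R/I^t)=\operatorname{Ass}(R/I^{(t)})\subseteq\operatorname{Min}(I),
\]
so $I^t$ has no embedded primes.

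For part (2), use the short exact sequence
\[
0\to M_t\to R/I^t\to R/I^{(t)}\to 0 .
\]
It gives the two inclusions
\[
\operatorname{Ass}(M_t)\subseteq\operatorname{Ass}(R/I^t)
\qquad\text{and}\qquad
\operatorname{Ass}(R/I^t)\subseteq\operatorname{Ass}(M_t)\cup\operatorname{Ass}(R/I^{(t)})\subseteq \operatorname{Ass}(M_t)\cup\operatorname{Min}(I).
\]
To show $\operatorname{Ass}(M_t)\subseteq\operatorname{Emb}_t(I)$, it remains to exclude minimal primes from $\operatorname{Ass}(M_t)$. For $P\in\operatorname{Min}(I)$, the description above gives $I^{(t)}R_P=Q_PR_P=I^tR_P$. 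Hence $(M_t)_P=0$, so $P\notin\operatorname{Supp}(M_t)\supseteq\operatorname{Ass}(M_t)$. Conversely, take an embedded prime $P\in\operatorname{Emb}_t(I)$. It lies in $\operatorname{Ass}(R/I^t)$ but not in $\operatorname{Min}(I)$, so the second inclusion forces $P\in\operatorname{Ass}(M_t)$. This proves $\operatorname{Ass}(M_t)=\operatorname{Emb}_t(I)$.

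The only point needing care is the identification $I^tR_P\cap R=Q_P$ for a minimal prime $P$, together with the fact that this component is independent of the chosen decomposition. This is the standard uniqueness of isolated primary components, so I do not expect a real obstacle. The hypothesis that $I$ has no embedded primes enters exactly in making the symbolic power an intersection over $\operatorname{Min}(I)$ only.
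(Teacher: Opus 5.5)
Your proof is correct and follows essentially the same route as the paper. Both arguments identify $I^tR_P\cap R$ as the isolated $P$-primary component of $I^t$ for $P\in\operatorname{Min}(I)$, deduce $\operatorname{Ass}(R/I^{(t)})\subseteq\operatorname{Min}(I)$, and then combine the exact sequence $0\to M_t\to R/I^t\to R/I^{(t)}\to 0$ with $(M_t)_P=0$ at minimal primes; your use of an explicit irredundant primary decomposition simply spells out the step in part (1) that the paper states tersely.
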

\begin{proof}
(1) We first note that $\Ass(R/I^{(t)}) = \Ass(R/I) = \Min(I)$. Indeed, for each $P \in \Ass(R/I) = \Min(I)$, the radical $\sqrt{I^t R_P} = PR_P$ is the maximal ideal of $R_P$. Hence, $I^t R_P$ is $PR_P$-primary, which implies that $Q_P = I^t R_P \cap R$ is $P$-primary. Since $R/I^{(t)} \hookrightarrow \bigoplus_{P \in \Min(I)} R/Q_P$, we deduce that $\Ass(R/I^{(t)}) \subseteq \Min(I)$. Moreover, because $\sqrt{I^{(t)}} = \sqrt{I}$, it follows that $\Ass(R/I^{(t)}) = \Ass(R/I) = \Min(I)$.

For each $P \in \Ass(R/I) = \Min(I)$, let $Q_P = I^t R_P \cap R$. Then $Q_P$ is the $P$-primary component of $I^t$. Therefore, $I^t = I^{(t)}$ if and only if $I^t$ has no embedded primes.

(2) From the short exact sequence 
$$0 \to M_t \to R/I^t \to R/I^{(t)} \to 0,$$
we obtain the inclusions $\Ass(M_t) \subseteq \Ass(R/I^t) \subseteq \Ass(M_t) \cup \Ass(R/I^{(t)})$. For each $P \in \Min(I)$, we have $I^{(t)} R_P = I^t R_P$, which implies that $(M_t)_P = 0$ and thus $\Ass(M_t) \cap \Min(I) = \emptyset$. Since $\Ass(R/I^{(t)}) = \Min(I)$, it follows that $\Ass(M_t) = \Ass(R/I^t) \setminus \Min(I) = \Emb_t(I)$, as desired.
\end{proof}

We note that the assumption that $I$ has no embedded primes is not strictly necessary if symbolic powers are defined via intersection over minimal primes. For convenience, we assume throughout that $I$ has no embedded primes so that the two standard definitions of symbolic powers coincide.

\begin{corollary}
Let $R$ be a Noetherian ring and $I$ a proper ideal of $R$. Assume that $I$ has no embedded primes. Then 
$$\dim(M_t) = \max \{ \dim(R/P) \mid P \in \operatorname{Emb}_t(I) \}.$$
\end{corollary}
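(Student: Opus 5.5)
The plan is to reduce the statement to the standard fact that for a finitely generated module $M$ over a Noetherian ring,
\[
\dim(M) = \max\{\dim(R/P) \mid P \in \Ass(M)\},
\]
and then to substitute the description of $\Ass(M_t)$ from Theorem~\ref{thm_ass_M}(2).

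\textbf{Step 1 (support and dimension).} Since $M_t = I^{(t)}/I^t$ is a finitely generated $R$-module, $\dim(M_t)$ equals the Krull dimension of $R/\operatorname{ann}(M_t)$. This is the supremum of $\dim(R/P)$ over $P \in \supp(M_t)$, and hence the supremum over the minimal elements of $\supp(M_t)$.

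\textbf{Step 2 (minimal primes of the support).} The minimal primes of $\supp(M_t)$ are exactly the minimal elements of $\Ass(M_t)$, and every prime in $\supp(M_t)$ contains some associated prime. Therefore
\[
\dim(M_t) = \max\{\dim(R/P) \mid P \in \Ass(M_t)\}.
\]
This is a maximum over a finite set, since $\Ass(M_t)$ is finite.

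\textbf{Step 3 (apply Theorem~\ref{thm_ass_M}).} Under the hypothesis that $I$ has no embedded primes, Theorem~\ref{thm_ass_M}(2) gives $\Ass(M_t) = \Emb_t(I)$. Substituting this into Step 2 yields the formula.

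The only point needing care is the degenerate case $M_t = 0$, i.e.\ $I^t = I^{(t)}$. By Theorem~\ref{thm_ass_M}(1), this is exactly the case $\Emb_t(I) = \emptyset$. Both sides are then understood via the convention that the dimension of the zero module, and the maximum over the empty set, is $-\infty$ (or $-1$). I expect no real obstacle: the content is entirely in Theorem~\ref{thm_ass_M}, and the rest is the standard relation between $\Ass$, $\supp$, and dimension.
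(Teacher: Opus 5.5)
Your proposal is correct and follows the paper's route: the paper simply says the formula follows immediately from Theorem~\ref{thm_ass_M}, and you make explicit the standard fact $\dim(M)=\max\{\dim(R/P)\mid P\in\Ass(M)\}$ for finitely generated modules that this implicitly relies on. Your treatment of the degenerate case $M_t=0$ (equivalently $\Emb_t(I)=\emptyset$), via a convention for the empty maximum, is a reasonable addition that the paper does not mention.
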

\begin{proof}
The conclusion follows immediately from Theorem~\ref{thm_ass_M}.
\end{proof}

We will apply this to give a combinatorial description of $\dim(M_t(I))$ when $I$ is the edge ideal of a simple graph. We first establish a preliminary lemma.

\begin{lemma}\label{lem_chain_localization}
Let $R$ be a Noetherian ring and let $P \subseteq Q$ be prime ideals of $R$. Then for any ideal $J$ of $R$, we have 
$$(J R_P \cap R) R_Q = J R_P \cap R_Q.$$
\end{lemma}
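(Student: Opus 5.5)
The plan is to view everything inside $R_P$ via the natural maps $R \to R_Q \to R_P$; this is legitimate because $P \subseteq Q$, so $R_P$ is the localization of $R_Q$ at the prime $PR_Q$. Thus $J R_P \cap R$ means the contraction of $JR_P$ along $R \to R_P$, and $J R_P \cap R_Q$ means its contraction along $R_Q \to R_P$. Set $K = JR_P \cap R$, an ideal of $R$. I will prove the two inclusions $KR_Q \subseteq JR_P \cap R_Q$ and $JR_P \cap R_Q \subseteq KR_Q$ directly.

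For the first inclusion, take an element $a/s \in KR_Q$ with $a \in K$ and $s \in R\setminus Q$. Since $a \in K$, its image lies in $JR_P$. Moreover $s \notin Q \supseteq P$, so $s$ is a unit in $R_P$. Hence the image of $a/s$ in $R_P$ lies in $JR_P$, which means $a/s \in JR_P \cap R_Q$.

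For the reverse inclusion, take $x = a/s \in R_Q$ (with $a\in R$, $s \notin Q$) whose image in $R_P$ lies in $JR_P$. Because $s$ is a unit in $R_P$, the image of $a = s\cdot x$ also lies in $JR_P$. So $a \in K$, and therefore $x = a/s \in KR_Q$.

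The only point needing care is the identification $(R_Q)_{PR_Q} \cong R_P$ and the compatibility of the maps $R\to R_Q\to R_P$; once this is in place, everything reduces to the observation that elements of $R\setminus Q$ are units in $R_P$. I expect no real obstacle beyond being careful that "$\cap$" denotes contraction rather than set intersection, since the maps need not be injective.
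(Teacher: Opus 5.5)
Your proposal is correct and is essentially the paper's proof: the paper also factors $R \to R_P$ as $R \to R_Q \xrightarrow{\theta} R_P$, reads $JR_P \cap R_Q$ as $\theta^{-1}(JR_P)$, and proves both inclusions because elements of $R\setminus Q$ become units in $R_P$. The only difference is cosmetic: the paper needs just the well-defined map $\theta$ rather than the identification $(R_Q)_{PR_Q}\cong R_P$, and it gets the first inclusion from $\theta^{-1}(JR_P)$ being an ideal containing $\psi(K)$ rather than writing elements of $KR_Q$ as $a/s$ (which is also valid).
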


\begin{proof}
Let $\varphi \colon R \to R_P$ and $\psi \colon R \to R_Q$ be the canonical localization maps, and let $\theta \colon R_Q \to R_P$ be defined by $\theta(x/s) = x/s$. This map is well-defined since $s \notin Q$ implies $s \notin P$. By definition, we have $\varphi = \theta \circ \psi$. 

Set $K = J R_P \cap R = \varphi^{-1}(J R_P)$. 

Let $x \in K$. Then $\theta(\psi(x)) = \varphi(x) \in J R_P$, which implies $\psi(x) \in \theta^{-1}(J R_P)$. Since $K R_Q = \psi(K) R_Q$, it follows that $K R_Q \subseteq \theta^{-1}(J R_P) = J R_P \cap R_Q$.

Conversely, let $x/s \in R_Q$ such that $\theta(x/s) \in J R_P$. Then $\varphi(x) = \theta(x/s) \varphi(s) \in J R_P$, which implies $x \in \varphi^{-1}(J R_P) = K$. Therefore, in $R_Q$, we have $x/s = \psi(x)/s \in K R_Q$. This completes the proof.
\end{proof}

\begin{lemma}\label{lem_M_localization} 
Let $R$ be a Noetherian ring and $I$ a non-zero proper ideal of $R$. Assume that $I$ has no embedded primes. For any prime ideal $Q$ of $R$, we have 
$$(M_t)_Q = (I R_Q)^{(t)} / (I R_Q)^t.$$    
\end{lemma}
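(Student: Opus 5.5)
Localization is exact, so $(M_t)_Q = I^{(t)}R_Q / I^t R_Q$. Since $I^t R_Q = (IR_Q)^t$ automatically, the whole lemma reduces to the identity
$$I^{(t)} R_Q = (IR_Q)^{(t)}.$$
Here the right-hand side is computed in $R_Q$ using $\Ass_{R_Q}(R_Q/IR_Q)$. If $I \not\subseteq Q$, then $IR_Q = R_Q$ and $I^{(t)}R_Q = R_Q$, so both sides of the lemma are zero. With the convention that the symbolic power of the unit ideal is the unit ideal, this case is trivial. Hence we may assume $I \subseteq Q$.

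\textbf{Step 1 (the associated primes).} Associated primes localize: $\Ass_{R_Q}(R_Q/IR_Q) = \{PR_Q : P \in \Ass(R/I),\ P \subseteq Q\}$. Since $I$ has no embedded primes, these are exactly the $PR_Q$ with $P \in \Min(I)$ and $P \subseteq Q$. In particular $IR_Q$ again has no embedded primes. Moreover, for such $P$ we have $(R_Q)_{PR_Q} = R_P$ and $(IR_Q)^t (R_Q)_{PR_Q} = I^tR_P$. Therefore
$$(IR_Q)^{(t)} = \bigcap_{P \in \Min(I),\, P \subseteq Q} \bigl(I^t R_P \cap R_Q\bigr).$$

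\textbf{Step 2 (localizing the intersection).} Write $I^{(t)} = \bigcap_{P \in \Min(I)} Q_P$ with $Q_P = I^tR_P \cap R$. Localization commutes with finite intersections, so $I^{(t)}R_Q = \bigcap_P Q_P R_Q$. We split this intersection into two kinds of terms.
\begin{enumerate}
\item If $P \not\subseteq Q$, then $Q_P$ is $P$-primary, as shown in the proof of Theorem~\ref{thm_ass_M}. Hence $Q_P \not\subseteq Q$, so $Q_PR_Q = R_Q$ and this term drops out.
\item If $P \subseteq Q$, Lemma~\ref{lem_chain_localization} gives $Q_P R_Q = I^tR_P \cap R_Q$.
\end{enumerate}
Comparing with the display in Step 1 yields $I^{(t)}R_Q = (IR_Q)^{(t)}$, and the lemma follows.

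\textbf{Main obstacle.} The only real content is the identification in Step 1: the symbolic power in $R_Q$ must be taken over precisely the primes $PR_Q$ with $P \subseteq Q$, and its local terms must agree with $I^tR_P$ via $(R_Q)_{PR_Q} \cong R_P$. Once this bookkeeping is in place, Lemma~\ref{lem_chain_localization} handles the rest. Some care is also needed for the degenerate case $I \not\subseteq Q$.
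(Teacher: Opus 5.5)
Your proof is correct and follows the same route as the paper. Both arguments use exactness of localization and commute localization with the finite intersection defining $I^{(t)}$. They then discard the $P$-primary components with $P \not\subseteq Q$ and apply Lemma~\ref{lem_chain_localization} to the remaining ones. Your Step 1, which identifies $\Ass_{R_Q}(R_Q/IR_Q)$ with $\{PR_Q : P \in \Min(I),\ P \subseteq Q\}$, and your separate handling of the case $I \not\subseteq Q$ spell out what the paper leaves implicit in the phrase ``taking the intersection over all minimal primes of $IR_Q$''.
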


\begin{proof} 
For any ring homomorphism $\varphi \colon R \to S$ and ideals $I, J$ of $R$, we have $\varphi(I J) = \varphi(I) \varphi(J)$. Consequently, $I^t R_Q = (I R_Q)^t$. 

Since localization commutes with finite intersections, we have 
$$I^{(t)} R_Q = \left( \bigcap_{P \in \Min(I)} (I^t R_P \cap R) \right) R_Q = \bigcap_{P \in \Min(I)} \left( (I^t R_P \cap R) R_Q \right).$$
Note that $(I^t R_P \cap R)$ is $P$-primary. If $P \not\subseteq Q$, then there exists an element $s \in P \setminus Q$. A power of $s$ lies in $(I^t R_P \cap R)$, which becomes a unit in $R_Q$; hence, $(I^t R_P \cap R) R_Q = R_Q$. 

If $P \subseteq Q$, then by Lemma~\ref{lem_chain_localization}, we have $(I^t R_P \cap R) R_Q = I^t R_P \cap R_Q = (I R_Q)^t (R_Q)_{P R_Q} \cap R_Q$. Taking the intersection over all minimal primes of $I R_Q$, it follows that $I^{(t)} R_Q = (I R_Q)^{(t)}$. The conclusion then follows by localizing the exact sequence defining $M_t$.
\end{proof}

We now turn to the associated primes of $N_t$. We first recall a preliminary lemma (see \cite[Proposition 5, page 263]{B}).

\begin{lemma}\label{lem_ass_localization} 
Let $R$ be a Noetherian ring, $M$ an $R$-module, and $S$ a multiplicatively closed subset of $R$. Then 
$$\Ass_{S^{-1} R}(S^{-1} M) = \{ S^{-1} P \mid P \in \Ass_R(M) \text{ and } S \cap P = \emptyset \}.$$
\end{lemma}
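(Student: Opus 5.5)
We prove the two inclusions separately, working with the localization map $\iota \colon M \to S^{-1}M$, $m \mapsto m/1$. Throughout we use that every associated prime is of the form $\operatorname{ann}$ of a single element, and that $R$ and hence $S^{-1}R$ are Noetherian, so all ideals involved are finitely generated.

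For the inclusion ``$\supseteq$'', let $P \in \Ass_R(M)$ with $P \cap S = \emptyset$, and write $P = \operatorname{ann}_R(m)$. We claim that $\operatorname{ann}_{S^{-1}R}(m/1) = S^{-1}P$. Clearly $S^{-1}P$ kills $m/1$. Conversely, suppose $(r/s)(m/1) = 0$. Then $urm = 0$ for some $u \in S$, so $ur \in P$. Since $u \notin P$ and $P$ is prime, we get $r \in P$. Because $S^{-1}P$ is a prime of $S^{-1}R$ when $P \cap S = \emptyset$, it is an associated prime of $S^{-1}M$.

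For the inclusion ``$\subseteq$'', every prime of $S^{-1}R$ has the form $S^{-1}P$ with $P$ prime in $R$ and $P \cap S = \emptyset$. Suppose $S^{-1}P = \operatorname{ann}_{S^{-1}R}(m/s)$; since $s$ is a unit, we may take the element to be $m/1$. Write $P = (p_1,\dots,p_k)$. Each $p_i m/1 = 0$, so there are $u_i \in S$ with $u_i p_i m = 0$. Set $u = \prod u_i$ and $m' = um$. Then $P \subseteq \operatorname{ann}_R(m')$.

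For the reverse containment, let $r m' = 0$. Then $(r/1)(m/1) = 0$ in $S^{-1}M$, because $u$ is invertible there. Hence $r/1 \in S^{-1}P$, which means $vr \in P$ for some $v \in S$, and so $r \in P$ since $v \notin P$. Thus $P = \operatorname{ann}_R(m')$ and $P \in \Ass_R(M)$. Its contraction is the prime we started with, because $S^{-1}P \cap R = P$ for primes disjoint from $S$.

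The only delicate step is this last direction. There one must clear denominators simultaneously for all generators of $P$, which is exactly where Noetherianity (finite generation of $P$) is used.
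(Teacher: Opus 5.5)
Your proof is correct: the $\supseteq$ inclusion follows from computing $\operatorname{ann}_{S^{-1}R}(m/1)=S^{-1}P$ directly. The $\subseteq$ inclusion follows by clearing denominators over a finite generating set of $P$, which produces $m'=um$ with $\operatorname{ann}_R(m')=P$.

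The paper gives no proof of its own and only cites Bourbaki. Your argument is the standard one behind that reference. The one blemish is the garbled sentence about contraction: it should simply say that $P$ is disjoint from $S$ and that $S^{-1}P$ is the prime you started with.
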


\begin{theorem}\label{thm_ass_N} 
Let $R$ be a Noetherian ring and $I$ a non-zero proper ideal of $R$. Assume that $I$ has no embedded primes and $\operatorname{ht}(I) \ge 1$. Then 
$$\Ass(N_t) = \Ass(R/I) = \Min(I).$$
\end{theorem}
\begin{proof}
From the short exact sequence 
$$0 \to N_t \to R/I^{(t+1)} \to R/I^{(t)} \to 0,$$
we deduce that $\Ass(N_t) \subseteq \Ass(R/I^{(t+1)}) = \Ass(R/I)$. Now, for any $P \in \Ass(R/I) = \Min(I)$, localizing at $P$ yields $(N_t)_P \cong I^t R_P / I^{t+1} R_P$. By Lemma~\ref{lem_ass_localization}, the set of associated primes of $(N_t)_P$ satisfies $\Ass_{R_P}((N_t)_P) \subseteq \{ P R_P \}$. 

Since $\operatorname{ht}(I) \ge 1$, we have $I R_P \neq 0$ and thus $I^t R_P \neq 0$. By Nakayama's Lemma, it follows that $I^t R_P / I^{t+1} R_P \neq 0$. Hence, $\Ass_{R_P}((N_t)_P) = \{ P R_P \}$, which implies that $P \in \Ass(N_t)$. The conclusion follows.
\end{proof}

We also have the following result on the quotient module $I^r / I^q$.

\begin{theorem}\label{theorem_filtered_I}
Let $R$ be a Noetherian ring and let $I$ be a proper ideal of $R$. Assume that one of the following conditions holds:
\begin{enumerate}
    \item $\operatorname{depth}(\operatorname{gr}_I(R)) \ge 1$;
    \item $R$ is a polynomial ring over a field and $I$ is a monomial ideal.
\end{enumerate} 
Then for all $1 \le r < q$, we have 
$$\operatorname{Ass}(I^r / I^q) = \operatorname{Ass}(R / I^q).$$    
\end{theorem}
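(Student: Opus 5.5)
The plan is to prove the two inclusions separately. The inclusion $\Ass(I^r/I^q) \subseteq \Ass(R/I^q)$ holds in general, because $I^r/I^q$ is a submodule of $R/I^q$. The whole task is therefore to show that every $P \in \Ass(R/I^q)$ is associated to $I^r/I^q$. Concretely, if $P = (I^q : f)$ for some $f \in R$, we want an element $g \in I^r$ with $(I^q : g) = P$.

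\emph{Case (1).} Since $\operatorname{depth}(\operatorname{gr}_I(R)) \ge 1$, we have $(I^{n+1} : I) = I^n$ for all $n$. A standard consequence is $(I^{n+k} : I^k) = I^n$; here $I^0 = R$ and $(I^k : I^k) = R$. Fix $P = (I^q : f)$. Then $f \notin I^q$, so $f \in I^{j}\setminus I^{j+1}$ for some $j < q$ (possibly $j=0$).
\begin{itemize}
\item If $j \ge r$, then $f \in I^r$ already and we are done.
\item If $j < r$, then $(I^q : I^{q-j-1}) = I^{j+1}$ does not contain $f$. Hence there is $x \in I^{q-j-1}$ with $xf \notin I^q$.
\end{itemize}
More usefully, we can iterate one step at a time. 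Suppose $f \in I^j \setminus I^{q}$ with $j<q-1$ and $(I^q : f) = P$. Then $f \notin (I^{q} : I)$ when $j \le q-2$? This needs care. The key equality is $(I^q:I)=I^{q-1}$. So if $f \notin I^{q-1}$, there is $a \in I$ with $af \notin I^q$, and $(I^q : af) \supseteq P$.

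To keep the annihilator equal to $P$, I would instead pass to the localization $R_P$. Associatedness of $P$ is equivalent to $PR_P \in \Ass$ after localizing, that is, to $\operatorname{depth}$ of the localized module being $0$. The hypothesis localizes, since $\operatorname{gr}_{IR_P}(R_P)$ is a localization of $\operatorname{gr}_I(R)$, and depth $\ge1$ of the associated graded ring is equivalent to $I^n$ being Ratliff–Rush closed for all $n$ after localization. So we may assume $(R,\mathfrak m)$ is local and $P=\mathfrak m$. Then $H^0_{\mathfrak m}(R/I^q)\neq 0$, and we must show $H^0_{\mathfrak m}(I^r/I^q)\ne0$.

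Choose $f \notin I^q$ with $\mathfrak m f \subseteq I^q$, and take $j$ maximal with $f\in I^j$. If $j<r$, the colon equalities show that $I^{r-j}f \not\subseteq I^{q-j+j}$... The cleaner route is to use that $\mathfrak m \in \Ass(R/I^q)$ forces $\mathfrak m\in\Ass(I^{j}/I^{j+1})$ for some $j\le q-1$, via the filtration $I^j/I^{q}$. The depth hypothesis implies $\Ass(I^j/I^{j+1}) \subseteq \Ass(I^{j'}/I^{j'+1})$ for $j\le j'$: multiplication by a nonzerodivisor in $\operatorname{gr}$ of positive degree embeds the graded pieces. Via the socle of that graded piece this produces an element of $I^{q-1}\setminus I^q$ killed by $\mathfrak m$ modulo $I^q$, which lies in $I^r$ because $r \le q-1$.

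\emph{Case (2).} For monomial ideals, all associated primes are monomial primes and can be witnessed by monomials: $P=(I^q : u)$ with $u$ a monomial. Multiply $u$ by a suitable monomial to push it into $I^r$ while preserving the colon. I would localize at the variables not in $P$ (set them to $1$), which reduces to $P=\mathfrak m$ in fewer variables. Then I would take a monomial $u$ maximal with respect to divisibility among monomials not in $I^q$. Every $x_iu\in I^q$, so $(I^q:u)=\mathfrak m$. It remains to check that such a maximal $u$ lies in $I^r$, and I expect this to be the main obstacle. Since $\mathfrak m\in \Ass$, some socle monomial exists, so $I^q$ is $\mathfrak m$-primary-like in the relevant part. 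A maximal standard monomial $u$ with $x_iu\in I^q$ for all $i$ should be comparable to $I^{q-1}$ via degree or exponent arguments. If that fails, I would instead start from any monomial $v\in I^{q-1}\setminus I^q$ when one exists, and keep multiplying by variables while staying outside $I^q$. This terminates at a socle monomial, still in $I^{q-1}\subseteq I^r$, with colon $\mathfrak m$. The existence of such $v$ needs $I^{q-1}\ne I^q$, which holds by Nakayama and monomiality.
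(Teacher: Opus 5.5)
Your proposal has a genuine gap in case (2). In both cases it misses the observation that makes the theorem immediate: \emph{every} witness of $P$ already lies in $I^{q-1}$, so no new witness has to be constructed.

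If $I^q : f = P$, then $P \supseteq \sqrt{I^q} \supseteq I$. Hence $If \subseteq I^q$, and under (1) this gives $f \in I^q : I = I^{q-1} \subseteq I^r$. You were one step from this when you wrote that for $f \notin I^{q-1}$ there is $a \in I$ with $af \notin I^q$. Since $a \in I \subseteq P$, that case cannot occur.

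In the monomial case the analogue is as follows. $P$ is generated by variables, so some $x_j \in P$ gives $x_j f \in I^q$. For any monomial ideal, $I^q : x_j \subseteq I^{q-1}$: if $m x_j = g_1 \cdots g_q h$ with $g_i$ monomials of $I$, then $x_j$ divides either $h$, giving $m \in I^q$, or some $g_k$, giving $m \in I^{q-1}$. This is exactly the step you labelled ``the main obstacle'' and left unproved, so case (2) is not established.

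Your fallback for case (2) is also false. After reducing to $P = \mathfrak m$, the ideal $I^q$ is in general not $\mathfrak m$-primary. So multiplying an arbitrary $v \in I^{q-1}\setminus I^q$ by variables need not terminate, and no multiple of $v$ need have annihilator $\mathfrak m$. For instance, take $I = (x^2, xy) \subseteq k[x,y]$ and $q = 2$. Then $I^2 = x^2(x,y)^2$ and $I^2 : x^3 = (x,y)$, so $\mathfrak m$ is associated. Yet $v = xy^2 \in I \setminus I^2$ satisfies $I^2 : v = (x)$. Thus $xy^n \notin I^2$ for all $n$, and $R\bar v \cong R/(x)$ has no nonzero element annihilated by $\mathfrak m$.

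In case (1) your filtration route can be salvaged, but your justification for $\Ass(I^j/I^{j+1}) \subseteq \Ass(I^{j'}/I^{j'+1})$ is wrong. A homogeneous nonzerodivisor of $\operatorname{gr}_I(R)$ of degree $d$ only embeds the degree-$j$ piece into the degree-$(j+d)$ piece. A degree-$1$ nonzerodivisor need not exist (e.g.\ over a finite residue field), so you may skip degree $q-1$. The correct one-step shift uses $I^{j+2} : I = I^{j+1}$. If $x \in I^j \setminus I^{j+1}$ and $\mathfrak m x \subseteq I^{j+1}$, pick $a \in I$ with $ax \notin I^{j+2}$. Then $\mathfrak m(ax) \subseteq I^{j+2}$, so the class of $ax$ is a nonzero socle element in degree $j+1$. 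With the observation above, however, none of this machinery (localization, filtration, socle shifting) is needed.
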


\begin{proof}
Since $I^r / I^q \hookrightarrow R / I^q$, we have $\operatorname{Ass}(I^r / I^q) \subseteq \operatorname{Ass}(R / I^q)$. Now, let $P \in \operatorname{Ass}(R / I^q)$ and let $f \in R$ be an element such that $I^q : f = P$. It suffices to show that $f \in I^r$. 

We have $P \supseteq \sqrt{I^q} = \sqrt{I}$. Since $P$ is a prime ideal, $P \supseteq I$. Hence,
$$f \in I^q : P \subseteq I^q : I = I^{q-1} \subseteq I^r.$$
The equality $I^q : I = I^{q-1}$ follows from the assumption $\operatorname{depth}(\operatorname{gr}_I(R)) \ge 1$, which implies that $I^k$ is Ratliff--Rush closed for all $k \ge 1$ by \cite[Lemma 3.10]{rv10}.

Now assume that $R$ is a polynomial ring over a field and $I$ is a monomial ideal. Since $P$ is an associated prime of a monomial ideal, $P$ is generated by variables. In particular, $P$ contains a variable $x_j$, so
$$f \in I^q : P \subseteq I^q : x_j \subseteq I^{q-1} \subseteq I^r.$$
The conclusion follows.
\end{proof}

\begin{remark}
Let $R$ be a Noetherian ring and let $\mathbb{I} = \{I_n\}_{n \ge 0}$ be a multiplicative filtration, i.e., $I_0 = R$, $I_{n+1} \subseteq I_n$, and $I_m I_n \subseteq I_{m+n}$ for all $m, n \ge 0$. Let $G = \operatorname{gr}_{\mathbb{I}}(R) = \bigoplus_{n \ge 0} I_n / I_{n+1}$ and $G_+ = \bigoplus_{n \ge 1} I_n / I_{n+1}$. 

The filtration $\mathbb{I}$ is called \textit{Noetherian} if the Rees algebra $\mathcal{R}(\mathbb{I}) = \bigoplus_{n \ge 0} I_n t^n$ is a finitely generated $R$-algebra. Equivalently, there exists a positive integer $d$ such that $I_{n+d} = I_n I_d$ for all sufficiently large $n$. Let $d$ be the smallest such integer, and set $\tilde{I}_n = \bigcup_{k \ge 0} (I_{n + kd} :_R I_d^k)$. Then $\tilde{I}_n = I_n$ for all $n \ge 1$ if and only if $H^0_{G_+}(G) = 0$. In this case, we have 
\[
\operatorname{Ass}(I_r / I_q) = \operatorname{Ass}(R / I_q)
\]
for all $0 < r \le q - d$. In this general formulation, the condition $r \le q - d$ is necessary; we leave the verification as an exercise for the interested reader.
\end{remark}

\section{v-number of discrepancy modules}\label{sec_v}

In this section, we assume that $R$ is a graded Noetherian ring and $I$ a proper graded ideal of $R$.

\begin{theorem}\label{thm_v_M} 
Assume that $R$ is a standard graded Noetherian ring and $I$ is a non-zero proper graded ideal of $R$. Assume that $I$ has no embedded primes. Then for every prime $P \in \Emb_t(I)$, we have
$$\v_P(M_t) = \v_P(I^t).$$    
\end{theorem}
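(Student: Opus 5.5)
The plan is to compare directly the two sets of homogeneous elements over which the minima defining $\v_P(M_t)$ and $\v_P(I^t)$ are taken, and to show that they coincide degree by degree. I use the convention that $\v_P(I^t)$ is the local $\v$-number of the module $R/I^t$ at $P$:
$$\v_P(I^t)=\min\{\deg f \mid f\in R \text{ homogeneous},\ I^t:f=P\}.$$
For a homogeneous $f\in I^{(t)}$, its class $\bar f\in M_t=I^{(t)}/I^t$ has $\operatorname{ann}_R(\bar f)=I^t:f$, and $\deg\bar f=\deg f$. So
$$\v_P(M_t)=\min\{\deg f \mid f\in I^{(t)} \text{ homogeneous},\ I^t:f=P\}.$$
In a graded module the minimum can be taken over homogeneous elements. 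Both sets are nonempty: Theorem~\ref{thm_ass_M}(2) gives $P\in\Ass(M_t)$, and $P\in\Ass(R/I^t)$ by definition of $\Emb_t(I)$.

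The inclusion giving $\v_P(I^t)\le \v_P(M_t)$ is immediate, since every homogeneous $f\in I^{(t)}$ with $I^t:f=P$ is also admissible for $\v_P(I^t)$. The real content is the reverse inclusion. Let $f$ be homogeneous with $I^t:f=P$, where $P\in\Emb_t(I)$. I will show that automatically $f\in I^{(t)}$.

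By the definition of the symbolic power (with $\Ass(R/I)=\Min(I)$), it suffices to check that $f\in I^tR_Q\cap R$ for every $Q\in\Min(I)$. Fix such a $Q$.
\begin{enumerate}
\item The key observation is that $P\not\subseteq Q$. Indeed, $P\supseteq\sqrt{I^t}=\sqrt I\supseteq I$. If $P\subseteq Q$, minimality of $Q$ over $I$ would force $P=Q\in\Min(I)$, contradicting $P\in\Emb_t(I)$.
\item Choose $s\in P\setminus Q$. Then $sf\in I^t$, since $s\in P=I^t:f$.
\item Because $s$ is a unit in $R_Q$, we get $f=(sf)/s\in I^tR_Q$, so $f\in I^tR_Q\cap R$.
\end{enumerate}
Intersecting over all $Q\in\Min(I)$ gives $f\in I^{(t)}$. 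Hence the two sets of admissible elements coincide, and therefore $\v_P(M_t)=\v_P(I^t)$.

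I do not expect a serious obstacle. The only point needing care is step (1), that an embedded prime is not contained in any minimal prime of $I$. This is where the hypothesis that $I$ has no embedded primes enters: it guarantees that the symbolic power is the intersection over exactly $\Min(I)$, and Theorem~\ref{thm_ass_M} uses it to identify $\Ass(M_t)$. A secondary point is bookkeeping with gradings, namely that annihilators of homogeneous elements are graded and that homogeneous witnesses realize the minimum. This is standard for graded modules over a standard graded ring.
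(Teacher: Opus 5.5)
Your proposal is correct and follows the paper's proof essentially verbatim: both show that any homogeneous $f$ with $I^t : f = P$ already lies in $I^{(t)}$. Both do this by noting $P \not\subseteq Q$ for every $Q \in \Min(I)$ and inverting some $s \in P \setminus Q$ in $R_Q$. You also spell out three points the paper leaves implicit: the identification $\operatorname{ann}_R(\bar f) = I^t : f$, the easy inequality $\v_P(I^t) \le \v_P(M_t)$, and why an embedded prime cannot lie inside a minimal prime of $I$.
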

\begin{proof}
It suffices to show that if $f$ is a homogeneous element of $R$ such that $I^t : f = P$, then $f \in I^{(t)}$. Equivalently, we must show that $f \in I^t R_Q \cap R$ for every minimal prime $Q$ of $I$. Since $P$ is an embedded prime of $I^t$, we have $P \not\subseteq Q$. Choose an element $s \in P \setminus Q$. Since $s f \in I^t$ and $s$ becomes a unit in $R_Q$, we deduce that $f = (s f)/s \in I^t R_Q$. The conclusion follows.
\end{proof}

\begin{theorem}\label{thm_v_N} 
Assume that $R$ is a standard graded Noetherian ring and $I$ is a non-zero proper graded ideal of $R$. Assume that $I$ has no embedded primes and $\operatorname{ht}(I) \ge 1$. Let $P$ be a minimal prime of $I$. Assume that either of the following conditions holds: 
\begin{enumerate}
    \item $\operatorname{depth}(\operatorname{gr}_{I R_P}(R_P)) \ge 1$, where $\operatorname{gr}_{I R_P}(R_P)$ is the associated graded ring of $I R_P$;
    \item $R$ is a polynomial ring and $I$ is a monomial ideal.
\end{enumerate} 
Then 
$$\v_P(N_t) = \v_P(I^{(t+1)}).$$    
\end{theorem}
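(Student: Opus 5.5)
The plan is to prove the two inequalities separately. The inequality $\v_P(N_t) \ge \v_P(I^{(t+1)})$ is immediate. Let $\bar f \in N_t = I^{(t)}/I^{(t+1)}$ be homogeneous with $\operatorname{ann}_R(\bar f) = P$, and lift it to a homogeneous $f \in I^{(t)}$. Then $I^{(t+1)} : f = P$, so $f$ is a witness for $\v_P(I^{(t+1)})$ of the same degree. (By Theorem~\ref{thm_ass_N}, $P \in \Ass(N_t)$, so both sides are defined.) For the reverse inequality, I will show the following, in the spirit of Theorem~\ref{thm_v_M}: if $f$ is homogeneous with $I^{(t+1)} : f = P$, then $f \in I^{(t)}$. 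Its class in $N_t$ is then nonzero, since $P \neq R$, and has annihilator exactly $P$.

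To show $f \in I^{(t)}$, I must check that $f \in I^t R_Q \cap R$ for every $Q \in \Min(I)$. I split into two cases.

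\emph{Case $Q \neq P$.} Since $P$ and $Q$ are distinct minimal primes, $P \not\subseteq Q$. Choose $s \in P \setminus Q$. Then $sf \in I^{(t+1)} \subseteq I^{(t)} \subseteq I^t R_Q$, and since $s$ is a unit in $R_Q$, we get $f \in I^tR_Q$.

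\emph{Case $Q = P$.} Localize the relation $Pf \subseteq I^{(t+1)}$ at $P$. By the computation in Lemma~\ref{lem_M_localization}, we have $I^{(t+1)}R_P = (IR_P)^{(t+1)}$. Because $P$ is minimal, $PR_P$ is the only minimal prime of $IR_P$, so this equals $I^{t+1}R_P$. Since $I \subseteq P$, this gives
\[
f/1 \in I^{t+1}R_P :_{R_P} PR_P \subseteq I^{t+1}R_P : IR_P .
\]

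It remains to show that the last colon is contained in $I^tR_P$; this is the key step. Under hypothesis (1), $\operatorname{depth}\operatorname{gr}_{IR_P}(R_P) \ge 1$ means that all powers of $IR_P$ are Ratliff--Rush closed, exactly as in the proof of Theorem~\ref{theorem_filtered_I}. Hence $I^{t+1}R_P : IR_P = I^tR_P$.

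Under hypothesis (2), I expect the main obstacle, since $f$ need not be a monomial and $R_P$ is not a polynomial ring. I would proceed as follows. Since $P$ is a monomial prime, it contains a variable $x_j$, so $x_j f \in I^{t+1}R_P$. Let $S = k(x_i : x_i \notin P)[x_i : x_i \in P]$, and let $I'$ be the monomial ideal of $S$ obtained from $I$ by setting the variables outside $P$ equal to $1$. Then $I^{t+1}R_P = I'^{t+1}S_{PS}$, and similarly for the $t$-th power. For monomial ideals, colons by monomials are monomial, and one checks on monomials that $I'^{t+1} : x_j \subseteq I'^t$: a monomial $u$ with $ux_j$ divisible by a product of $t+1$ generators is divisible by a product of $t$ of them. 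Colon commutes with localization, so
\[
f \in (I'^{t+1} : x_j) S_{PS} \subseteq I'^t S_{PS} = I^t R_P .
\]
This completes the case $Q = P$, hence $f \in I^{(t)}$, and the equality $\v_P(N_t) = \v_P(I^{(t+1)})$ follows.
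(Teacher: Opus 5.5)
Your proof is correct and follows the paper's argument: the element $s \in P \setminus Q$ handles the minimal primes $Q \neq P$, and at $P$ everything reduces to the containment $I^{t+1}R_P :_{R_P} PR_P \subseteq I^tR_P$; you also spell out the easy inequality $\v_P(I^{(t+1)}) \le \v_P(N_t)$, which the paper leaves implicit. The remaining differences are minor: under (1) you invoke Ratliff--Rush closedness as in Theorem~\ref{theorem_filtered_I}, whereas the paper proves the containment directly by showing that a counterexample would give a nonzero element of $\operatorname{gr}_{IR_P}(R_P)$ annihilated by its homogeneous maximal ideal, and under (2) your passage through $S = k(x_i : x_i \notin P)[x_i : x_i \in P]$ together with the commutation of colons with localization makes rigorous, for non-monomial $f$, the reduction the paper only sketches as ``set the variables outside $P$ equal to $1$''.
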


\begin{proof}
Let $f$ be a homogeneous element of $R$ such that $I^{(t+1)} : f = P$. For any minimal prime $Q$ of $I$ with $Q \neq P$, choose an element $s \in P \setminus Q$. Since $s f \in I^{(t+1)} \subseteq I^{t+1} R_Q$ and $s$ becomes a unit in $R_Q$, we deduce that $f = (s f)/s \in I^{t+1} R_Q \subseteq I^t R_Q$. Hence, $f \in I^t R_Q \cap R$. 

Thus, it suffices to show that under either hypothesis, $f \in I^t R_P \cap R$, which is equivalent to showing 
\begin{equation}\label{colon_condition}
    I^{t+1} R_P :_{R_P} P R_P \subseteq I^t R_P.
\end{equation}
First, assume that $\operatorname{ht}(I) \ge 1$ and $\operatorname{depth}(\operatorname{gr}_{I R_P}(R_P)) \ge 1$. Let $(A, \mathfrak{m}) = (R_P, P R_P)$ and $J = I R_P$. Then $J$ is an $\mathfrak{m}$-primary ideal of $A$. Suppose for the sake of contradiction that there exists an element $f \in J^{t+1} : \mathfrak{m}$ with $f \notin J^t$. Let $s$ be the largest integer such that $f \in J^s$. Then $s < t$ and $\bar{f} \in J^s / J^{s+1}$ is a non-zero element in $\operatorname{gr}_J(A)$. 

Let $M = \mathfrak{m}/J \oplus \bigoplus_{k \ge 1} J^k / J^{k+1}$ be the unique homogeneous maximal ideal of $\operatorname{gr}_J(A)$. We will show that $M \bar{f} = 0$. Indeed, for any $a \in \mathfrak{m}$, we have $(a + J)(f + J^{s+1}) = a f + J^{s+1}$. Since $a \in \mathfrak{m}$, we have $a f \in \mathfrak{m} f \subseteq J^{t+1} \subseteq J^{s+2}$ (since $t \ge s + 1$), so $\bar{a} \bar{f} = 0$. 

Now, for any $g \in J^k$ with $k \ge 1$, we have $J^k f = J^{k-1} J f \subseteq J^{k-1} \mathfrak{m} f \subseteq J^{k-1} J^{t+1} = J^{t+k}$. Therefore, $(g + J^{k+1})(f + J^{s+1}) = g f + J^{s+k+1} = 0 \in \operatorname{gr}_J(A)$ because $t+k \ge s+k+1$. This implies that $M \bar{f} = 0$, which contradicts the assumption that $\operatorname{depth}(\operatorname{gr}_J(A)) \ge 1$.

Now, assume that $R = k[x_1, \ldots, x_n]$ is a polynomial ring over a field $k$ and $I$ is a monomial ideal of $R$. Since the minimal primes of $I$ are generated by variables, localizing at $P$ is equivalent to setting all variables outside of $P$ to $1$. Condition~\eqref{colon_condition} is then equivalent to asking that $I^{t+1} : \mathfrak{m} \subseteq I^t$ holds in a polynomial ring, where $\mathfrak{m}$ is generated by the variables in $P$. Since $I^{t+1} : \mathfrak{m} = \bigcap_{i=1}^n (I^{t+1} : x_i)$, it suffices to show that $I^{t+1} : x_i \subseteq I^t$ for each variable $x_i \in \mathfrak{m}$. 

Indeed, let $m \in I^{t+1} : x_i$ be a monomial. Then $m x_i \in I^{t+1}$, so there exist generators $f_1, \ldots, f_{t+1} \in I$ and a monomial $h \in R$ such that $m x_i = f_1 \cdots f_{t+1} h$. Thus, $x_i$ divides either $h$ or one of the generators $f_j$. If $x_i \mid h$, then $m$ is divisible by $f_1 \cdots f_{t+1} \in I^{t+1} \subseteq I^t$. If $x_i \mid f_j$, then $f_j / x_i \in R$, so $m = (f_j / x_i) \prod_{l \neq j} f_l h \in I^t$. In either case, $m \in I^t$, completing the proof.
\end{proof}

\begin{theorem} 
Assume that $R$ is a standard graded Noetherian ring and $I$ is a proper ideal of $R$. Assume that either of the following conditions holds:
\begin{enumerate}
    \item $\operatorname{depth}(\operatorname{gr}_I(R)) \ge 1$;
    \item $R$ is a polynomial ring over a field and $I$ is a monomial ideal.
\end{enumerate}
Then $\v(I^r / I^q) = \v(R / I^q)$ for all $1 \le r < q$.
\end{theorem}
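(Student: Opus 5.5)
We will show the stronger statement that the local $\v$-numbers agree prime by prime:
\[
\v_P(I^r/I^q)=\v_P(R/I^q)\quad\text{for every } P\in\Ass(R/I^q).
\]
Taking the minimum over $P$ then gives the claimed equality. The set of primes over which we minimize is the same on both sides by Theorem~\ref{theorem_filtered_I}, which gives $\Ass(I^r/I^q)=\Ass(R/I^q)$ under either hypothesis.

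\textbf{Unpacking the local $\v$-numbers.} An element of $R/I^q$ with annihilator $P$ is a class $\bar f$, with $f$ homogeneous, such that $I^q:f=P$; its degree is $\deg f$. An element of $I^r/I^q$ with annihilator $P$ is exactly such a class with $f\in I^r$, and it has the same degree $\deg f$, since the grading on $I^r/I^q$ is induced from $R$. Here we use the convention, consistent with Theorem~\ref{thm_v_M}, that $\v_P(I^t)$ means $\v_P(R/I^t)$. Because $I^r/I^q\subseteq R/I^q$, this description immediately gives $\v_P(R/I^q)\le \v_P(I^r/I^q)$.

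\textbf{The reverse inequality.} It suffices to show that every homogeneous $f$ with $I^q:f=P$ already lies in $I^r$. This is precisely the step carried out inside the proof of Theorem~\ref{theorem_filtered_I}.
\begin{enumerate}
\item Since $P$ is prime and $P\supseteq\sqrt{I^q}$, we have $P\supseteq I$. Hence $f\in I^q:P\subseteq I^q:I$.
\item Under hypothesis (1), $I^q:I=I^{q-1}$, because $\operatorname{depth}\operatorname{gr}_I(R)\ge1$ makes all powers of $I$ Ratliff--Rush closed.
\item Under hypothesis (2), $P$ is generated by variables, so it contains some variable $x_j$, and $I^q:x_j\subseteq I^{q-1}$ by the monomial argument already used there.
\end{enumerate}
In either case $f\in I^{q-1}\subseteq I^r$, since $r\le q-1$.

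\textbf{Where care is needed.} The only subtle point is making sure that the elements realizing $\v_P$ can be taken homogeneous, so that the colon condition is $I^q:f=P$ for a single homogeneous $f$; this is part of the definition. The argument in Theorem~\ref{theorem_filtered_I} places no homogeneity restriction on $f$, so it applies verbatim. Everything else is bookkeeping of degrees, which are preserved by the inclusion $I^r/I^q\hookrightarrow R/I^q$.
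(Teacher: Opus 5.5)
Your proposal is correct and follows the paper's argument: the paper proves this theorem by pointing back to the proof of Theorem~\ref{theorem_filtered_I}, whose key step is exactly your observation that any $f$ with $I^q:f=P$ lies in $I^q:I=I^{q-1}$ (respectively $I^q:x_j\subseteq I^{q-1}$) and hence in $I^r$, so the witnesses for $\v_P$ in $R/I^q$ and in $I^r/I^q$ coincide. Like the paper, you tacitly need $I\neq 0$: for $I=0$ in case (2), $P=0$ contains no variable, and the statement itself fails, since $I^r/I^q=0$ has no associated primes while $\v(R/I^q)=\v(R)=0$.
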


\begin{proof}
The conclusion follows from the proof of Theorem~\ref{theorem_filtered_I}.
\end{proof}
\section{The case of edge ideals of graphs}\label{sec_edge}

Let $G$ be a simple graph on the vertices $V(G) = [n] = \{1,\ldots,n\}$ and edge set $E(G)$. Let $R = k [x_1,\ldots,x_n]$ be a polynomial ring over a field $k$. The edge ideal of $G$ is 
$$I(G) = (x_ix_j \mid \{i,j\} \in E(G)\}.$$
In this section, we give a more detail description of $\v$-number of $M_t(I(G))$ in terms of combinatorial data of $G$. First, we recover the result of Muta \cite[Corollary 4.10]{m26} (also see \cite{hm26}).

\begin{corollary} 
Let $I = I(G)$ be the edge ideal of a simple graph $G$. Then 
$$\dim(M_t(I(G))) = \max \{ \alpha(G - N_G[C]) \mid C \text{ is an odd cycle of length } \le 2t-1 \}.$$    
\end{corollary}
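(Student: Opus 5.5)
By the Corollary following Theorem~\ref{thm_ass_M}, it suffices to compute $\max\{\dim(R/P) \mid P \in \Emb_t(I)\}$ for $I = I(G)$. The plan is to (i) describe $\Emb_t(I(G))$ combinatorially, and (ii) compute $\dim R/P$ for each such prime and maximize.

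\emph{Step 1: the embedded primes.} Associated primes of $I^t$ are monomial primes $P_S = (x_i \mid i \in S)$ for $S \subseteq [n]$. For step (i) I will use the known description of $\Ass(R/I(G)^t)$ (Martinez-Bernal--Morey--Villarreal, together with the localization principle of Lemma~\ref{lem_M_localization}). Localizing at $P_S$ amounts to setting $x_j = 1$ for $j \notin S$. Under this substitution, $I R_{P_S}$ becomes the edge ideal of $G[S]$ plus the variables $x_i$ with $i \in S \cap N_G([n]\setminus S)$. Then $P_S \in \Emb_t(I)$ iff $P_S$ is an embedded associated prime of this localized power.

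For $t$ large the relevant criterion is the following. $P_S$ is associated to $I^t$ but not minimal iff $S$ is not a minimal vertex cover, and $S$ contains a set of the form $N_G[C] \cup T$. Here $C$ is an odd cycle of length $2k+1 \le 2t-1$, and $T$ is a minimal vertex cover of $G - N_G[C]$, possibly enlarged by a suitable cover structure. The bound $2k+1 \le 2t-1$ comes from the fact that the monomial $\prod_{v\in C} x_v \cdot (\text{edges})^{t-k-1}$ witnesses $P$ in $I^t : f$ exactly when $t \ge k+1$.

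For the dimension computation we only need two facts:
\begin{enumerate}
\item[(a)] For every odd cycle $C$ of length $\le 2t-1$ and every minimal vertex cover $T$ of $G - N_G[C]$, the prime $P = (x_v \mid v \in N_G[C] \cup T)$ lies in $\Emb_t(I)$.
\item[(b)] Every $P \in \Emb_t(I)$ contains such a prime $(x_v \mid v\in N_G[C]\cup T')$ with $T'$ some vertex cover of $G - N_G[C]$.
\end{enumerate}

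\emph{Step 2: proving (a).} Take the witness $f = \prod_{v \in C} x_v \cdot \prod_{j=1}^{t-k-1} e_j \cdot \prod_{u \in U} x_u$. Here $U$ is the complement of $T$ in $V(G)\setminus N_G[C]$, an independent set, and the $e_j$ are copies of a fixed edge of $C$.

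One then checks $I^t : f = P$:
\begin{itemize}
\item For a neighbor $w$ of $C$ adjacent to $c \in C$, the product $x_w f$ factors as $x_w x_c$ times $k$ edges covering $C\setminus\{c\}$ times $t-k-1$ edges, so it lies in $I^t$.
\item Each $x_v$ with $v \in C$ works the same way.
\item Each $x_u$ with $u \in T$ has a neighbor in $U$ by minimality of $T$, so $x_u f \in I^t$.
\item Conversely, the degree count and the independence of $U$ in $G - N_G[C]$ show $f \notin I^t$. No monomial outside $P$ lies in $I^t : f$, since the variables outside $P$ are indexed by the independent set $U$, which is not adjacent to $C$ and has no internal edges.
\end{itemize}
Since $N_G[C]$ contains an edge, $P$ is not a minimal cover, so $P$ is embedded.

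\emph{Step 3: proving (b).} This is the main obstacle. Let $P \in \Emb_t(I)$ with $I^t : f = P$. By Theorem~\ref{thm_v_M}, $f \in I^{(t)}\setminus I^t$. I would invoke the structure theorem for $I(G)^{(t)}$ due to Sullivant and Gu--Hà--Skelton: modulo $I^t$, the symbolic power is generated by products involving odd cycles of length $\le 2t-1$. Hence $f$ is divisible by $\prod_{v\in C} x_v$ for some such $C$, and this gives $N_G[C] \subseteq \operatorname{supp}(P)$. The remaining vertices of $P$ must cover $G - N_G[C]$, since $P \supseteq I$.

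\emph{Step 4: conclusion.} Then $\dim R/P = n - |N_G[C]| - |T'| \le |V(G - N_G[C])| - \tau(G - N_G[C]) = \alpha(G - N_G[C])$, where $\tau$ is the vertex cover number. By (a), choosing $T$ to be a minimum vertex cover attains equality. This yields the displayed formula; when no odd cycle of length $\le 2t-1$ exists, both sides are trivial.
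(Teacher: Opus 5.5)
\emph{Gap in Step 3.} Your Step 3 concludes $N_G[C]\subseteq \supp(P)$ from $x_C\mid f$ without an argument, and for an arbitrary such $C$ the inference is false. For $w\in N_G[C]$ you only know $x_wx_C\in I^{k+1}$. To get $x_wf\in I^t$ you would need $f/x_C\in I^{t-k-1}$, and the structure theorem you cite does not give that.

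Here is a concrete counterexample. Let $G$ have edges $12,13,23,14,24,45$, let $t=3$, and let $f=x_1x_2x_3x_4x_5=(x_1x_2x_3)(x_4x_5)$.
\begin{itemize}
\item $f\in I^{(3)}\setminus I^3$, since $f\in I^{(2)}I$ and $\deg f=5$.
\item $I^3:f=(x_1,x_2,x_3,x_4)$, and this prime is embedded because $\{1,2,4\}$ is already a vertex cover.
\item $f$ is divisible by $x_1x_2x_4$, and $\{1,2,4\}$ is a triangle.
\item Yet $5\in N_G[\{1,2,4\}]$ while $x_5^mf\notin I^3$ for every $m$, since only one factor $x_4$ is available to pair with $x_5$.
\end{itemize}
Only the triangle $\{1,2,3\}$ satisfies your claim here. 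So the cycle has to be chosen relative to $P$, not merely inside $\supp(f)$, and your argument gives no way to do that.

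\emph{The missing idea: localize at $P=P_S$.} Put $U=V(G)\setminus S$, which is independent, and $H=G-N_G[U]$.
\begin{itemize}
\item In $R_P$ the $x_u$ with $u\in U$ become units, so $IR_P=(x_j\mid j\in N_G(U))R_P+I(H)R_P$.
\item Since $P\in\Emb_t(I)$, Theorem~\ref{thm_ass_M} and Lemma~\ref{lem_M_localization} give $(IR_P)^{(t)}\neq (IR_P)^t$.
\item The variables $x_j$ with $j\in N_G(U)$ do not occur in $I(H)$. By the binomial expansion of symbolic powers of sums in disjoint variables, this forces $I(H)^{(s)}\neq I(H)^s$ for some $s\le t$.
\item The same characterization that underlies your structure-theorem citation then gives an odd cycle $C$ of length $\le 2t-1$ inside $H$.
\item Then $V(C)\cap N_G[U]=\emptyset$, i.e.\ $N_G[C]\subseteq S$, which is exactly your claim (b).
\end{itemize}
This is the paper's route: it works with the support of $M_t$ and never needs a witness $f$ for (b). With this replacement, your explicit witness in Step 2 and the counting in Step 4 complete the proof.

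\emph{Minor fix in Step 2.} Your reason for non-minimality is wrong: a minimal vertex cover can contain both ends of an edge, for example two vertices of a triangle. The correct reason is that every $v\in V(C)$ has $N_G(v)\subseteq N_G[C]\subseteq S$, so $S\setminus\{v\}$ is still a cover.
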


\begin{proof}
By Lemma~\ref{lem_M_localization} and Theorem~\ref{thm_ass_M}, we deduce that $\dim(M_t(I(G))) = \max \{ \dim(R/Q) \mid (I R_Q)^t \neq (I R_Q)^{(t)} \}$ for prime ideals $Q$ corresponding to vertex covers of $G$. Let $F = V(G) \setminus C$. Then $I R_Q = I(G - N_G[F]) R_Q + (x_j \mid j \in N_G(F)) R_Q$. The latter condition is equivalent to requiring that $G - N_G[F]$ contains an odd cycle of length at most $2t-1$. The conclusion follows.
\end{proof}

\begin{theorem}\label{thm_v_2}
Let $G$ be a simple graph. By convention, we set $\v(I(G)) = 0$ when $G$ has no edges. Then 
$$\v(M_2(I(G))) = \min \{ 3 + \v(I(G - N_G[T])) \mid T \text{ is a triangle in } G \}.$$
\end{theorem}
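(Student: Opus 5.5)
By Theorem~\ref{thm_ass_M} and Theorem~\ref{thm_v_M}, we have
$$\v(M_2(I)) = \min\{\v_P(I^2) \mid P \in \Emb_2(I)\},$$
so the plan is to describe the embedded primes of $I^2$ and the monomials $f$ with $I^2 : f = P$. Since $I^2$ is a monomial ideal, each $P \in \Ass(R/I^2)$ is generated by variables. The local $\v$-number $\v_P(I^2)$ is attained by a monomial $f$ with $I^2 : f = P$: take a homogeneous $f$ of minimal degree with $I^2:f=P$ and pass to a suitable monomial in its support, using the standard argument for monomial ideals. We also use the known fact (Simis--Vasconcelos--Villarreal, or Lemma~\ref{lem_M_localization} together with the description in the preceding corollary) that $I^2 \ne I^{(2)}$ locally exactly when a triangle is present. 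Concretely, $P = (x_i \mid i \in C)$ is an embedded prime of $I^2$ if and only if $C \supseteq N_G[T]$ for some triangle $T$, and $C \setminus N_G[T]$ restricted to $G - N_G[T]$ behaves like an associated prime of $I(G - N_G[T])$ (or is empty-like when $G - N_G[T]$ has no edges).

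\textbf{Upper bound.} Fix a triangle $T = \{a,b,c\}$ and set $H = G - N_G[T]$. Let $g$ be a monomial in the variables of $H$ realizing $\v(I(H))$, with $I(H) : g = P_H$; take $g = 1$ and $P_H = 0$ if $H$ has no edges. Put $f = x_a x_b x_c \, g$, of degree $3 + \v(I(H))$, and $P = (x_v \mid v \in N_G(T)) + P_H$. Since $T$ is a triangle, $x_v \in N_G(T)$ gives $x_v x_a x_b x_c \in I^2$, and for $x_u \in P_H$ some $x_u x_w$ divides $x_u g$, whence $x_u g \in I$ and $x_u f \in x_a x_b \cdot I \subseteq I^2$. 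Hence $P \subseteq I^2 : f$. For the reverse inclusion, $I^2 : f$ is a monomial ideal, so we check it contains no monomial $m$ in variables outside $P$. Such $m$ has support in $T$, in $V(H)\setminus P_H$, or in variables of $N_G(T)$ not in $P$; note $N_G(T)$ here means vertices outside $T$, all of which lie in $P$. We then argue by degree/matching count: a product of two edges dividing $m x_a x_b x_c g$ uses at most one edge inside $T$, and any other edge must lie in $H \cup T$ with no crossing edges since $H$ is separated from $T$. This would force $mg \in I(H)$ or $m$ to contain a $T$-variable. For the latter, $x_a^2 x_b x_c \in I^2$ shows $x_a \in I^2:f$, which is problematic. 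So I would instead choose $f = x_a x_b x_c g$ and check $x_a x_a x_b x_c = (x_a x_b)(x_a x_c) \in I^2$; this means $x_a \in P$. So $P$ must include the triangle variables as well: set $P = (x_v \mid v \in N_G[T]) + P_H$, which is fine since embedded primes need only contain $N_G[T]$. With this $P$ the verification reduces to $I(H) : g = P_H$, giving $\v(M_2) \le 3 + \v(I(H))$.

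\textbf{Lower bound (main obstacle).} Let $f$ be a monomial with $I^2 : f = P$ and $P$ embedded. Localizing, i.e.\ setting the variables outside $P$ to $1$, reduces to the case where $I^2 : f = \mathfrak m_P$ and $P$ is not minimal over $I$. The crux is to show that $f$ is divisible by $x_a x_b x_c$ for some triangle $T$, with $N_G[T] \subseteq P$, and that $f / (x_a x_b x_c)$ restricted to $H = G - N_G[T]$ is a witness for a prime of $I(H)$. First, $f \notin I^2$ but $x_v f \in I^2$ for all $v \in P$. Writing $x_v f = e_1 e_2 h$ and comparing with $f \notin I^{(2)}$ is impossible, since $f \in I^{(2)}$ by Theorem~\ref{thm_v_M}. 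Since $f \in I^{(2)} \setminus I^2$, a standard result on $I^{(2)}$ for graphs (that $I^{(2)} = I^2 + (x_a x_b x_c \mid \text{triangles})$) yields a triangle monomial dividing $f$. Then I would show that $f' = f / (x_a x_b x_c)$ must avoid $I$ in a suitable sense and satisfy $I(H) : f'|_H = P \cap V(H)$. This requires careful combinatorial case analysis, choosing among possibly several triangles dividing $f$, and handling variables of $N_G(T)$ appearing in $f'$; possibly one deletes them while keeping the colon, decreasing the degree. This step is where I expect most of the work.
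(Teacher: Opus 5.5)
Your reduction is the same as the paper's: Theorems~\ref{thm_ass_M} and~\ref{thm_v_M}, monomial witnesses, and $f\in I^{(2)}\setminus I^2$ forcing $x_ax_bx_c\mid f$ for some triangle $T$. Your upper bound is also correct once $P$ contains all of $N_G[T]$. However, the lower bound, which is the actual content of the theorem, is only announced (\emph{I would show \dots\ careful combinatorial case analysis}), so as written the proof has a genuine gap. The difficulties you anticipate do not arise: there are not several triangles to choose between, and there are no $N_G(T)$-variables in $f/x_ax_bx_c$ to delete. The step of setting variables outside $P$ to $1$ should be dropped, since it changes degrees.

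\textbf{The missing observation.} Take any triangle $T=\{a,b,c\}$ with $x_ax_bx_c\mid f$ and write $f=x_ax_bx_c\,g$. Since $f\notin I^2$, we get $\supp(g)\cap N_G[T]=\emptyset$:
\begin{itemize}
\item for $v\in T$, e.g.\ $x_a\cdot x_ax_bx_c=(x_ax_b)(x_ax_c)\in I^2$;
\item for $v\in N_G(T)$ adjacent to $a$, $x_v\cdot x_ax_bx_c=(x_vx_a)(x_bx_c)\in I^2$.
\end{itemize}
Also $\supp(g)$ is independent, because $x_ax_b$ times an edge lies in $I^2$.

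Thus $\supp(g)\subseteq V(H)$ with $H=G-N_G[T]$. Now reuse the argument from your own upper-bound reverse inclusion: each $T$-variable has exponent one, so at most one edge of a two-edge product lies in $T$, and no edge joins $T$ to $V(H)$. This yields
\[
I^2:f=(x_j\mid j\in N_G[T])+\bigl(I(H):g\bigr).
\]
The second summand is generated by monomials in the variables of $V(H)$ only. Hence $I^2:f$ is generated by variables if and only if $I(H):g$ is, i.e.\ $I(H):g$ is an associated prime of $I(H)$ (or $0$ if $H$ has no edges). Therefore $\deg f=3+\deg g\ge 3+\v(I(H))$, which completes the lower bound and is exactly the paper's proof. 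No choice of triangle is needed, because this bound holds for every triangle dividing $f$.
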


\begin{proof} 
Let $I = I(G)$. By Theorem~\ref{thm_ass_M}, we have 
$$\v(M_2(I(G))) = \min \{ \v_P(I^2) \mid P \text{ is an embedded prime of } I^2 \}.$$
Let $f$ be a monomial such that $I^2 : f = P$ is an embedded prime of $I^2$. By the proof of Theorem~\ref{thm_ass_M}, we must have $f \in I^{(2)} \setminus I^2$. Therefore, there exists a triangle $T = \{1,2,3\}$ in $G$ such that $f = x_1 x_2 x_3 g$ for some monomial $g$. Since $f \notin I^2$, $\supp(g)$ is an independent set and $\operatorname{supp}(g) \cap N_G[T] = \emptyset$. 

Now, we have 
$$I^2 : f = I(G - N_G[T]) : g + (x_j \mid j \in N_G[T]).$$
Hence, $I^2 : f$ is generated by variables if and only if $I(G - N_G[T]) : g$ is generated by variables, which means $I(G - N_G[T]) : g$ is an associated prime of $I(G - N_G[T])$. Taking the minimum over all such monomials $g$ and triangles $T$, the conclusion follows.
\end{proof}

We will now compute the $\v$-number of $I(G)^2$ and classify the graphs for which $\v(I(G)^2) = \v(I(G)^{(2)})$. We first introduce some notation. Let $\mathbf{a} = (a_1, \ldots, a_n) \in \mathbb{N}^n$ be an exponent vector. For a subset $U \subseteq [n]$, we set $a(U) = \sum_{i \in U} a_i$. The support of $\mathbf{a}$ is defined as $\operatorname{supp}(\mathbf{a}) = \{ i \mid a_i \neq 0 \}$. We recall the following criterion from \cite{chjv26}.

\begin{lemma}
Let $I$ be a squarefree monomial ideal and let $C$ be a minimal vertex cover of $H(I)$, the facet hypergraph of $I$. Then $I^{(t)} : x^{\mathbf{a}} = P_C$ if and only if $a(C) = t - 1$ and $a(D) \ge t$ for every minimal vertex cover $D \neq C$ of $H(I)$.    
\end{lemma}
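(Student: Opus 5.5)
We have to prove the lemma: for a squarefree monomial ideal $I$ and a minimal vertex cover $C$ of $H(I)$, $I^{(t)} : x^{\mathbf{a}} = P_C$ if and only if $a(C) = t-1$ and $a(D) \ge t$ for every minimal vertex cover $D \neq C$. The whole argument rests on the primary decomposition of symbolic powers of squarefree monomial ideals. The minimal primes of $I$ are exactly the $P_D = (x_i \mid i \in D)$ for $D$ running over the minimal vertex covers of $H(I)$. Since $I^t R_{P_D} \cap R = P_D^t$, we have
$$I^{(t)} = \bigcap_{D} P_D^{t},$$
and a monomial $x^{\mathbf{b}}$ lies in $P_D^t$ if and only if $b(D) \ge t$. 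Because colon commutes with intersection,
$$I^{(t)} : x^{\mathbf{a}} = \bigcap_D \left(P_D^t : x^{\mathbf{a}}\right), \qquad P_D^t : x^{\mathbf{a}} = P_D^{\max(t - a(D),0)}.$$
Here $P^0 = R$. The last identity is a direct monomial check: $x^{\mathbf{b}} x^{\mathbf{a}} \in P_D^t$ exactly when $b(D) \ge t - a(D)$.

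For the backward direction, assume $a(C) = t-1$ and $a(D) \ge t$ for all $D \ne C$. Then every factor with $D \neq C$ equals $R$. The factor for $C$ is $P_C^{1} = P_C$. Hence the colon is $P_C$.

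For the forward direction, assume $I^{(t)} : x^{\mathbf{a}} = P_C$. The colon is then the intersection $\bigcap_{D \in S} P_D^{e_D}$, where $S = \{D : a(D) < t\}$ and $e_D = t - a(D) \ge 1$.

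1. The set $S$ is nonempty, since otherwise the colon would be $R$.
2. For every $D \in S$ we have $P_C \subseteq P_D^{e_D} \subseteq P_D$. Since $C$ and $D$ are both minimal vertex covers, $D = C$. Hence $S = \{C\}$.
3. Now $P_C^{e_C} = P_C$ forces $e_C = 1$, because $P_C^2 \subsetneq P_C$ when $C \neq \emptyset$. Thus $a(C) = t-1$.
4. For every $D \ne C$ we have $D \notin S$, which means $a(D) \ge t$.

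The only step needing care is the description of $I^{(t)}$ as $\bigcap_D P_D^t$, i.e. that $I R_{P_D} = P_D R_{P_D}$. This holds because inverting the variables outside $D$ kills every generator of $I$ except those that reduce to single variables of $D$, and minimality of $D$ ensures each $x_i$ with $i\in D$ arises this way. This is standard, and the remaining verifications are routine monomial computations.
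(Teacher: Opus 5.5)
The paper gives no proof of this lemma; it recalls it from \cite{chjv26}, so there is no in-paper argument to compare against. Your proof is correct and self-contained. It runs through four steps:
\begin{enumerate}
\item the decomposition $I^{(t)}=\bigcap_D P_D^t$;
\item the fact that colon commutes with finite intersections;
\item the monomial identity $P_D^t : x^{\mathbf{a}} = P_D^{\max(t-a(D),0)}$;
\item for the forward direction, the containment $P_C\subseteq P_D$ together with minimality of $D$, and $P_C^{e}\neq P_C$ for $e\ge 2$.
\end{enumerate}
This is the natural argument, and both directions are complete.

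A few small points to tighten.
\begin{itemize}
\item The equality $I^tR_{P_D}\cap R=P_D^t$ needs more than $IR_{P_D}=P_DR_{P_D}$. You also need $P_D^tR_{P_D}\cap R=P_D^t$, which holds because $P_D^t$ is $P_D$-primary; this is true since $P_D$ is generated by variables.
\item Localizing at $P_D$ does not ``kill'' generators. It turns each $x_F$ into a unit times $x_{F\cap D}$, which lies in $P_DR_{P_D}$ because $D$ is a cover. Conversely, minimality of $D$ gives, for each $i\in D$, an edge $F$ with $F\cap D=\{i\}$, so $x_i\in IR_{P_D}$.
\item In steps 1 and 3 you tacitly use $C\neq\emptyset$, that is, $I\neq 0$. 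This is harmless, but worth stating.
\end{itemize}
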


Now, assuming that $I = I(G)$ is the edge ideal of a simple graph $G$, we will solve the optimization problem to provide a refined combinatorial description of $\v_P(I^{(2)})$.

\begin{lemma}\label{lem_c_2} 
Let $G$ be a simple graph and let $C$ be a minimal vertex cover of $G$. Set $F = V(G) \setminus C$. Let $f = x_c g$, where $c \in C$ and $\operatorname{supp}(g) \subseteq F$. Denote by $\mathbf{a}$ the exponent vector of $f$. Then $I^{(2)} : f = P_C$ if and only if 
$$a(N(b) \cap F) \ge 1 \quad \text{for all } b \in C \setminus \{c\}, \quad \text{and} \quad a(N(c) \cap F) \ge 2.$$
\end{lemma}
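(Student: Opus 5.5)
We plan to derive Lemma~\ref{lem_c_2} directly from the criterion of \cite{chjv26} recalled above, applied with $t = 2$ and $H(I) = G$. For edge ideals the minimal vertex covers of $H(I)$ are exactly the minimal vertex covers of $G$. The criterion then says that $I^{(2)} : f = P_C$ if and only if $a(C) = 1$ and $a(D) \ge 2$ for every minimal vertex cover $D \neq C$. Since $f = x_c g$ with $\operatorname{supp}(g) \subseteq F$, we have $a(C) = a_c = 1$ automatically. So the whole task is to translate the condition ``$a(D) \ge 2$ for all minimal vertex covers $D \ne C$'' into the two neighborhood conditions.

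\emph{Necessity.} For $b \in C$, minimality of $C$ gives a neighbor of $b$ in $F$, and $N(b)\cap F \ne \emptyset$. Set $D_b = (C \setminus \{b\}) \cup (N(b) \cap F)$. This is a vertex cover: any edge losing $b$ has its other endpoint either in $C$ or in $N(b)\cap F$, and edges inside $F$ do not exist. If $D_b$ is not minimal, we pass to a minimal cover $D \subseteq D_b$. Such a $D$ cannot equal $C$, since $b \notin D$, and $a(D) \le a(D_b)$.
\begin{itemize}
\item For $b \neq c$ we get $2 \le a(D) \le a(D_b) = a_c + a(N(b)\cap F) = 1 + a(N(b)\cap F)$, so $a(N(b)\cap F) \ge 1$.
\item For $b = c$ we get $2 \le a(D_c) = a(N(c)\cap F)$.
\end{itemize}

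\emph{Sufficiency.} Let $D \ne C$ be a minimal vertex cover. Since both are minimal, $D \not\supseteq C$, so there is some $b \in C \setminus D$. Because $D$ covers every edge at $b$, we get $N(b) \subseteq D$, and in particular $N(b)\cap F \subseteq D$.
\begin{itemize}
\item If $b = c$ can be chosen, then $a(D) \ge a(N(c)\cap F) \ge 2$.
\item Otherwise $C \setminus D \subseteq C\setminus\{c\}$, so $c \in D$. Taking any $b \in C \setminus D$ with $b \ne c$, the sets $\{c\}$ and $N(b)\cap F$ are disjoint subsets of $D$, since $c \in C$. Hence $a(D) \ge a_c + a(N(b)\cap F) \ge 1 + 1 = 2$.
\end{itemize}
So the criterion is satisfied.

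The main obstacle is the necessity direction: one must produce, for each $b$, a minimal cover $D \neq C$ whose weight is controlled by $N(b)\cap F$. The construction of $D_b$ together with the monotonicity $a(D) \le a(D_b)$ under passing to a minimal subcover handles this. It remains to check carefully that the resulting minimal cover still avoids $b$, and hence differs from $C$; this holds because $D \subseteq D_b$ and $b \notin D_b$.
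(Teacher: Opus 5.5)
Your proof is correct and follows essentially the same argument as the paper: the covers $(C\setminus\{b\})\cup(N(b)\cap F)$ for necessity, and the case split on whether $c\in D$ for sufficiency. Passing to a minimal subcover, which still omits $b$ and hence differs from $C$, makes explicit a step the paper leaves implicit when it applies the criterion of \cite{chjv26} to a possibly non-minimal cover.
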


\begin{proof}
\textbf{Necessity:} For each $b \in C$, set $D = (C \setminus \{b\}) \cup (N(b) \cap F)$. Then $D$ is a vertex cover of $G$ with $C \not\subseteq D$, which implies $a(D) \ge 2$. When $b \neq c$, this yields $a(N(b) \cap F) \ge 1$; when $b = c$, this yields $a(N(c) \cap F) \ge 2$.

\textbf{Sufficiency:} Let $D$ be a vertex cover of $G$ such that $C \not\subseteq D$. Since $F$ is an independent set, we have $D \cap F \supseteq N(b) \cap F$ for all $b \in C \setminus D$. If $c \notin D$, then $a(D) \ge a(D \cap F) \ge a(N(c) \cap F) \ge 2$. If $c \in D$, then choose $b \in C \setminus D$. Then $b \neq c$, so $a(D) \ge a_c + a(N(b) \cap F) \ge 1 + 1 = 2$. The conclusion follows.
\end{proof}

From this, we deduce a finer result for the local $\v$-number of $I^{(2)}$.

\begin{theorem} 
Let $G$ be a simple graph and let $C$ be a minimal vertex cover of $G$ with corresponding minimal prime $P_C$ of $I(G)$. Then 
$$\v_{P_C}(I(G)^2) = \v_{P_C}(I(G)^{(2)}).$$    
\end{theorem}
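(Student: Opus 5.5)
We prove the two inequalities separately, working throughout with monomials. Both $I^2$ and $I^{(2)}$ are monomial ideals, so each local $\v$-number can be computed using monomials $f$ only.

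\textbf{The inequality $\v_{P_C}(I^{(2)}) \le \v_{P_C}(I^2)$.} Let $f$ be a monomial with $I^2 : f = P_C$. Since $P_C$ is a minimal prime of $I$, $I^2R_{P_C}=I^{(2)}R_{P_C}$ and $I^{(2)}$ has only minimal associated primes, so I would show directly that $I^{(2)}:f=P_C$. Clearly $I^{(2)}:f\supseteq I^2:f=P_C$. For any minimal prime $Q\neq P_C$, pick a variable $x_s\in P_C\setminus Q$; then $x_sf\in I^2\subseteq I^{(2)}$ gives $f\in I^2R_Q$, exactly as in the proof of Theorem~\ref{thm_v_N}. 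If $f\in I^{(2)}$ then $f\in I^2R_{P_C}$ too. In the monomial setting, localization at $P_C$ amounts to setting the variables of $F$ equal to $1$, so $x_sf\in I^2$ for $s\in C$ combined with $f\in I^2R_{P_C}$ should lead to a contradiction. A cleaner route is to argue that $I^{(2)}:f$ is a proper ideal containing $P_C$ whose associated primes are all minimal primes of $I$, hence it equals $P_C$ provided $f\notin I^{(2)}$. To rule out $f\in I^{(2)}$: if $f\in I^{(2)}$, then $P_C\subseteq I^2:f$ forces $f\in I^2 : P_C$. By the monomial argument in the proof of Theorem~\ref{thm_v_N}, $I^2R_{P_C}:P_CR_{P_C}\subseteq IR_{P_C}$, and this alone is not a contradiction, so I expect this step needs care. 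Rather than fight it, I would bypass this direction and argue degrees via the characterization below.

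\textbf{The inequality $\v_{P_C}(I^2)\le \v_{P_C}(I^{(2)})$.} This is the main content. Take a monomial $f$ of minimal degree with $I^{(2)}:f=P_C$. By the lemma from \cite{chjv26}, $a(C)=1$. Hence $f=x_cg$ with $c\in C$ and $\supp(g)\subseteq F$, and Lemma~\ref{lem_c_2} applies. By minimality of degree we may take $g$ squarefree-ish and minimal subject to $a(N(b)\cap F)\ge1$ for all $b\ne c$ and $a(N(c)\cap F)\ge2$. The goal is to show $I^2:f=P_C$ for this same $f$, or for a modification of the same degree. First, $f\notin I^2$ because $F$ is independent and $a(C)=1$, so $f$ has at most one edge factor. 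Next, for $b\in C$, the product $x_bf$ contains two disjoint edges: for $b\ne c$, use $x_b$ together with a neighbor in $F$ counted by $a(N(b)\cap F)$, and $x_c$ together with a neighbor in $F$. This needs the two chosen $F$-vertices to be distinct, or a single vertex appearing with exponent at least $2$. The hard case is when $N(b)\cap F$ and $N(c)\cap F$ meet $\supp(g)$ only in a common vertex $u$ of exponent $1$. There I would use $a(N(c)\cap F)\ge2$ to find a second $F$-neighbor $u'$ of $c$, and pair $x_bx_u$ with $x_cx_{u'}$. For $b=c$, $x_c^2g$ uses two neighbors in $F$ counted with multiplicity, which is again guaranteed by $a(N(c)\cap F)\ge 2$. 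For $y\in F$, we need $I^2:f\not\ni x_y$; indeed $x_yf$ still has $a(C)=1$, so it has no two disjoint edges. Since $I^2:f$ is a monomial ideal, it equals the monomial ideal generated by the variables it contains, provided no monomial in the $F$-variables alone lies in it. This holds by the same count $a(C)=1$. Thus $I^2:f=P_C$, giving $\v_{P_C}(I^2)\le\deg f$.

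\textbf{Closing the first direction.} For the reverse inequality, take $f$ of minimal degree with $I^2:f=P_C$. Since $x_sf\in I^2$ for every $s\in C$ while $x_yf\notin I^2$ for $y\in F$, write the exponent vector of $f$. Applying $I^2R_Q$-membership for $Q\ne P_C$ as above, together with $f\notin I^{(2)}$, I would verify that the conditions of the criterion from \cite{chjv26} hold, giving $I^{(2)}:f=P_C$. Showing $f\notin I^{(2)}$ reduces to $f\notin I^2R_{P_C}$, that is, $a(C)\le1$ after setting the $F$-variables to $1$. This comes from a degree-minimality exchange argument: if $a(C)\ge2$, dividing $f$ by a suitable $C$-variable should preserve $I^2:f=P_C$ and lower the degree. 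I expect this exchange to be the main obstacle.
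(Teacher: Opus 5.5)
Your second part, the inequality $\v_{P_C}(I^2)\le\v_{P_C}(I^{(2)})$, is correct and is essentially the paper's argument. Both write $f=x_cg$ via the criterion from \cite{chjv26}, then use Lemma~\ref{lem_c_2} to get $x_bf\in I^2$ for every $b\in C$, using $a(N(c)\cap F)\ge 2$ to handle multiplicities. The only difference is that you check $I^2:f\subseteq P_C$ directly by the count $a(C)=1$. The paper gets this for free from $I^2:f\subseteq I^{(2)}:f=P_C$. Your minimal-degree assumption on $f$ is never used.

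The gap is the other inequality, $\v_{P_C}(I^{(2)})\le\v_{P_C}(I^2)$, which you never establish. You correctly reduce it to showing $f\notin I^{(2)}$ whenever $I^2:f=P_C$. You then look for a contradiction in $f\in I^2:P_C$, which, as you note, gives none. Finally you defer to an exchange argument on $a(C)$ that you do not carry out.

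The missing idea is one line: localize the equality $I^2:f=P_C$ at $P_C$. This gives $I^2R_{P_C}:_{R_{P_C}}f=P_CR_{P_C}\neq R_{P_C}$, so $f\notin I^2R_{P_C}$. Since $I^{(2)}\subseteq I^2R_{P_C}\cap R$, it follows that $f\notin I^{(2)}$. Concretely, if $f\in I^{(2)}$ there is $s\notin P_C$ with $sf\in I^2$, i.e.\ $s\in I^2:f=P_C$, which is absurd.

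Combine this with your correct observation: $I^{(2)}:f$ is then a proper ideal containing $P_C$ whose associated primes are minimal primes of $I$, so it equals $P_C$. This holds for every such $f$, with no degree minimality needed. In particular, the case $a(C)\ge 2$ that your exchange argument targets cannot occur. By minimality of $C$, every $c\in C$ has a neighbour in $F$, so $a(C)\ge 2$ would produce an $F$-monomial $s$ with $sf\in I^2$.

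The paper does not prove this direction itself; it takes it from \cite[Lemma 2.4]{chjv26}. Its first sentence writes the inequality the wrong way round: the direction it proves by hand is the one in your second part.
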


\begin{proof}
By \cite[Lemma 2.4]{chjv26}, it suffices to prove that $\v_{P_C}(I(G)^{(2)}) \le \v_{P_C}(I(G)^2)$. We will show that if $u = x^{\mathbf{a}}$ is a monomial such that $I(G)^{(2)} : u = P_C$, then $I(G)^2 : u = P_C$. 

By Lemma~\ref{lem_c_2}, we may write $u = x_c g$ with $\operatorname{supp}(g) \subseteq F = V(G) \setminus C$. Since $I^2 \subseteq I^{(2)}$, we have $I^2 : u \subseteq I^{(2)} : u = P_C$. Thus, it suffices to show that $x_b \in I^2 : u$ for all $b \in C$. 

If $b = c$, then $a(N(c) \cap F) \ge 2$ implies that $x_b u = x_c^2 g \in I^2$. If $b \neq c$, then by Lemma~\ref{lem_c_2}, we have $a(N(b) \cap F) \ge 1$, so we may pick $d \in N(b) \cap F$ such that $a_d \ge 1$. Since $a(N(c) \cap F) \ge 2$, we can pick $d' \in N(c) \cap F$ such that $x_d x_{d'} \mid g$. Therefore, $x_b u = (x_b x_d)(x_c x_{d'}) (g / x_d x_{d'}) \in I^2$. Hence, $x_b u \in I^2$ for all $b \in C$, completing the proof.
\end{proof}

To state a formula for the $\v$-number of $I(G)^2$, we introduce a few definitions.

\begin{definition} 
Let $G$ be a simple graph. A subset $S \subseteq V(G)$ is called a $\v$-witness of $G$ if both $S$ and $V(G) \setminus N_G[S]$ are independent sets.    
\end{definition}

For a $\v$-witness set $S$, we define 
$$p(S) = \begin{cases} 
1 & \text{if } \{N_G(s)\}_{s \in S} \text{ are pairwise disjoint,} \\
0 & \text{otherwise.}
\end{cases}$$
A $\v$-witness $S$ whose neighborhoods $\{N_G(s)\}_{s \in S}$ are pairwise disjoint is called a $2$-packing. 

We then obtain the following results.

\begin{theorem}\label{thm_equality_v_2} 
Let $G$ be a simple graph. Then:
\begin{enumerate}
    \item $\v(I(G)^{(2)}) = \min \{ |S| + p(S) \mid S \text{ is a } \v\text{-witness of } G \}$.
    \item $\v(I(G)^2) = \min \{ \v(I(G)^{(2)}), \v(M_2(I(G))) \}$.
\end{enumerate}
Hence, $\v(I(G)^2) < \v(I(G)^{(2)})$ if and only if there exist a triangle $T$ and a $\v$-witness $S'$ of $G - N_G[T]$ such that:
\begin{enumerate}
    \item $G$ has no $\v$-witness set of size at most $|S'| + 1$;
    \item Every $\v$-witness set of $G$ of size $|S'| + 2$ is a $2$-packing.
\end{enumerate}    
\end{theorem}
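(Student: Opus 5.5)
Part (1) is a minimization over minimal vertex covers. By the preceding theorem, $\v_{P_C}(I^{(2)})=\v_{P_C}(I^2)$ for each minimal vertex cover $C$, and Lemma~\ref{lem_c_2} describes the monomials realizing $\v_{P_C}(I^{(2)})$. Here $\v(I^{(2)})=\min_C \v_{P_C}(I^{(2)})$, since $\Ass(R/I^{(2)})=\Min(I)$ by Theorem~\ref{thm_ass_M}.

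Every monomial $u$ with $I^{(2)}:u=P_C$ has $a(C)=1$ by the criterion from \cite{chjv26}, so $u=x_cg$ with $\supp(g)\subseteq F=V(G)\setminus C$. I would minimize $\deg u=1+\deg g$ under the conditions of Lemma~\ref{lem_c_2}.

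Take $g$ squarefree except possibly for one doubled vertex, and set $S=\supp(g)\subseteq F$. Then $S$ is independent, and the condition $a(N(b)\cap F)\ge1$ for all $b\in C\setminus\{c\}$ says that $S$ dominates $C\setminus\{c\}$. The condition $a(N(c)\cap F)\ge 2$ requires either two distinct vertices of $S$ adjacent to $c$, or a doubled exponent on one neighbor.

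To match the formula, I would reparametrize with the witness set $S\cup\{c\}$, or with $S$ itself. The vertices not dominated by this set lie in $F\setminus N[S]$ or equal $c$, and the complement of $N[S]$ must be independent. Checking this correspondence shows that the optimal degree is $|S'|+p(S')$ for a $\v$-witness $S'$. The case with no saving (one extra degree) occurs exactly when the neighborhoods are pairwise disjoint, i.e.\ when $c$ cannot be hit twice without an extra vertex. Conversely, from a $\v$-witness $S'$, I would extend $V\setminus N[S']$ to a maximal independent set $F$ and construct $C$ and $c$. I expect this bookkeeping, especially the role of $p(S)$ and the choice of $c$ as a common neighbor when $p(S)=0$, to be the main obstacle. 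I would first verify it on small cases such as paths and $C_5$.

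For (2), $\Ass(R/I^2)=\Min(I)\cup\Emb_2(I)$. Hence $\v(I^2)=\min\{\min_{C}\v_{P_C}(I^2),\ \min_{P\in\Emb_2}\v_P(I^2)\}$. The first term equals $\v(I^{(2)})$ by the preceding theorem. The second equals $\v(M_2(I))$ by Theorems~\ref{thm_ass_M} and~\ref{thm_v_M}.

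For the final equivalence, $\v(I^2)<\v(I^{(2)})$ iff $\v(M_2)<\v(I^{(2)})$. By Theorem~\ref{thm_v_2}, $\v(M_2)=3+\v(I(G-N[T]))$ minimized over triangles $T$. The $\v$-number of an edge ideal is the minimal size of a $\v$-witness, i.e.\ the known formula $\v(I(H))=\min|S'|$ (the $t=1$ analogue of (1)). So the condition is $3+|S'|<\min_S(|S|+p(S))$, which means no witness of size $\le|S'|+1$, and every witness of size $|S'|+2$ has $p=1$.
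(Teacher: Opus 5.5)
The overall structure of your proposal is the same as the paper's. For (1), you reduce via Lemma~\ref{lem_c_2} to $u=x_cg$ with $S=\supp(g)$, double an exponent when the neighborhoods are disjoint, and move $c$ to a common neighbor otherwise. For (2), you split $\Ass(R/I^2)$ into $\Min(I)$ and $\Emb_2(I)$. Part (2) is correct as you argue it.

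The gap is the step you defer as bookkeeping in (1). It cannot be completed, because the count comes out one larger than (1) asserts.
\begin{itemize}
\item In your own parametrization, $a(N(c)\cap F)\ge 2$ forces $c\in N(S)$. So $N(S)=C$, and $V\setminus N[S]=F\setminus S$ is independent.
\item Hence $S=\supp(g)$ itself is the $\v$-witness. The alternative $S\cup\{c\}$ is never independent, so it is not a candidate.
\item Then $\deg u=1+\deg g\ge 1+|S|+p(S)$. Equality is attained by $x_cx_S$ (with $c$ a common neighbor of two vertices of $S$) when $p(S)=0$, and by $x_cx_Sx_s$ when $p(S)=1$.
\end{itemize}
So your setup proves $\v(I(G)^{(2)})=1+\min\{|S|+p(S)\}$, and the formula in (1) as printed is false. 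For a single edge, $I^{(2)}=(x_1^2x_2^2)$. No monomial of degree at most $2$ has a prime colon, while $x_1x_2^2$ does, so $\v(I^{(2)})=3$. But both $\v$-witnesses are single vertices with $p=1$, so the printed formula gives $2$. The paper's own sketch, which also writes $f=x_cx^{\mathbf a}$, likewise only supports the $+1$ version.

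Your last step contains a compensating slip. Suppose $\v(I^{(2)})$ really were $\min_S(|S|+p(S))$. Then requiring $3+|S'|<|S|+p(S)$ for every witness $S$ rules out all witnesses of size at most $|S'|+2$, since such an $S$ has $|S|+p(S)\le |S'|+3$. It would also require witnesses of size $|S'|+3$ to be $2$-packings. Those are not conditions (a) and (b). Conditions (a) and (b) are exactly $|S|+p(S)\ge |S'|+3$ for every witness $S$, that is, $3+|S'|<\min_S(|S|+1+p(S))$.

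The fix is to prove (1) in the corrected form $\v(I(G)^{(2)})=1+\min\{|S|+p(S)\}$ and redo the arithmetic. With that change, your argument for (2) and for the final equivalence goes through: it uses $\v(I(H))=\min|S'|$ over $\v$-witnesses $S'$ of $H$, together with Theorem~\ref{thm_v_2}. As written, you reach the stated conclusion only because two off-by-one errors cancel.
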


\begin{proof} 
Let $f$ be a monomial such that $I(G)^{(2)} : f = P_C$ is an associated prime of $I(G)^{(2)}$. By Lemma~\ref{lem_c_2}, we can write $f = x_c x^{\mathbf{a}}$ for some $c \in C$ and $\operatorname{supp}(\mathbf{a}) \subseteq V(G) \setminus C$. Set $S = \operatorname{supp}(\mathbf{a})$. Lemma~\ref{lem_c_2} implies that $N_G(S) = C$. If the neighborhoods $\{N_G(s)\}_{s \in S}$ are pairwise disjoint, at least one variable $x_s$ in $x^{\mathbf{a}}$ must have an exponent of at least $2$, since $a(N_G(c) \cap S) \ge 2$. If there exist distinct $s_1, s_2 \in S$ sharing a common neighbor $c'$, we can replace $c$ with $c'$ to find a monomial $g$ such that $I(G)^{(2)} : g = P_C$ with $\deg(g) \le \deg(f)$. The formula for $\v(I(G)^{(2)})$ follows.

The final equivalence follows from Theorem~\ref{thm_v_2} and the formula for $\v(I(G)^{(2)})$.
\end{proof}

To establish a formula for $\v(M_3(I(G)))$, we need to describe the colon ideal $I^3 : f$ for $f \in I^{(3)}$. We first establish some preliminary lemmas.

\begin{lemma}\label{lem_colon_K_4}
Let $G$ be a simple graph and let $K = \{i, j, k, l\}$ induce a complete subgraph $K_4$ of $G$. Assume that $f = x_i x_j x_k x_l g \notin I^3$ with $N_G(K) \cap \operatorname{supp}(g) = \emptyset$. Then $I^3 : f$ is not a prime ideal.    
\end{lemma}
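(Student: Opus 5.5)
The plan is to show that $I^3 : f$ is a proper ideal that is not even radical. The witness is a single variable $x_i$ with $x_i^2 \in I^3 : f$ but $x_i \notin I^3 : f$.

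\textbf{Reading the hypothesis.} Since $K$ induces a $K_4$, every vertex of $K$ is a neighbour of another vertex of $K$, so $K \subseteq N_G(K)$. The hypothesis $N_G(K) \cap \operatorname{supp}(g) = \emptyset$ then says that $\operatorname{supp}(g)$ avoids $K$ and every neighbour of $K$. In particular there are no edges of $G$ between $\operatorname{supp}(g)$ and $K$. The proposal depends on this reading: if $N_G(K)$ were meant to exclude $K$, the case $x_i \mid g$ behaves differently, as noted at the end.

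\textbf{Step 1: $\operatorname{supp}(g)$ is independent.} The product $x_ix_jx_kx_l = (x_ix_j)(x_kx_l)$ lies in $I^2$. If $g$ were divisible by some edge monomial $x_ax_b$, then $f \in I^3$, a contradiction. Hence $\operatorname{supp}(g)$ is an independent set, and $I^3:f$ is proper because $f\notin I^3$.

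\textbf{Step 2: edges dividing a monomial $x_i^{e}f$ lie inside $K$.} The support of $x_i^{e} f$ is $K \cup \operatorname{supp}(g)$. There are no edges inside $\operatorname{supp}(g)$ (Step 1) and none between $\operatorname{supp}(g)$ and $K$ (hypothesis). So any product of edge monomials dividing $x_i^{e}f$ only uses $K$-variables. Consequently, $x_i^{e} f \in I^3$ if and only if the $K$-part $x_i^{1+e}x_jx_kx_l$ is divisible by a product of three edges of $K_4$.

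\textbf{Step 3: the witness.} For $e=2$, the $K$-part is $x_i^3x_jx_kx_l = (x_ix_j)(x_ix_k)(x_ix_l)$, so $x_i^2 \in I^3 : f$. For $e=1$, the $K$-part $x_i^2x_jx_kx_l$ has total degree $5 < 6$, so it cannot be divisible by three edges. By Step 2, $x_i f\notin I^3$, that is, $x_i \notin I^3:f$. Thus $I^3:f$ is proper, contains $x_i\cdot x_i$, and does not contain $x_i$, so it is not prime.

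\textbf{Main obstacle.} The only delicate point is Step 2: justifying that no edge of a product witnessing membership in $I^3$ can involve a variable of $g$. This is where the hypothesis on $N_G(K)$ and Step 1 are used. If the hypothesis allowed $\operatorname{supp}(g)\cap K\neq\emptyset$, the $K$-degree of $f$ could be $5$ and every $x_u$ with $u\in K$ might lie in the colon. The argument would then have to look for non-primality elsewhere, so I would first confirm that the closed-neighbourhood reading is the intended one.
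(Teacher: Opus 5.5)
Your proof is correct and follows essentially the paper's route: both arguments show $x_i f\notin I^3$ because no edge dividing $x_i f$ can meet $\operatorname{supp}(g)$, so only the degree-$5$ part $x_i^2x_jx_kx_l$ is available, and both then exhibit a product in $I^3:f$ none of whose factors lies in $I^3:f$. The paper uses $x_ix_j\in I\subseteq I^3:f$, while you use $x_i\cdot x_i$, which additionally shows $I^3:f$ is not radical. Your caution about the hypothesis is justified: the paper's proof tacitly needs your reading $K\subseteq N_G(K)$, and under the open-neighbourhood reading the statement fails, e.g.\ for $G=K_4$ and $g=x_i$ one gets $I^3:f=(x_i,x_j,x_k,x_l)$.
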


\begin{proof} 
Since $f \notin I^3$, $\operatorname{supp}(g)$ is an independent set. Clearly, $I \subseteq I^3 : f$. Suppose for the sake of contradiction that $x_i \in I^3 : f$. Then $x_i f = x_i^2 x_j x_k x_l g \in I^3$. Thus, at least one of the edges in a product representation of $x_i f$ must meet $\operatorname{supp}(g)$, implying that $N_G(K) \cap \operatorname{supp}(g) \neq \emptyset$, which is a contradiction. By symmetry, $x_i, x_j, x_k, x_l \notin I^3 : f$. Therefore, $I^3 : f$ cannot be a prime ideal; otherwise, it would contain $I(G)$ without containing any of the vertices $\{i, j, k, l\}$ of the $K_4$, failing to correspond to a vertex cover of $G$.
\end{proof}

\begin{lemma}\label{lem_colon_c_5} 
Let $G$ be a simple graph and let $C$ be an induced $5$-cycle $C_5$ in $G$. Assume that $f = x_C g \notin I^3$, where $x_C = \prod_{v \in V(C)} x_v$. Then $N_G[C] \cap \operatorname{supp}(g) = \emptyset$ and 
$$I^3 : f = (x_j \mid j \in N_G[C]) + (I(G - N_G[C]) : g).$$
\end{lemma}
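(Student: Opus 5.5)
The plan is to write $C$ as the induced cycle $1\!-\!2\!-\!3\!-\!4\!-\!5\!-\!1$ and use one combinatorial fact throughout. For every $c \in V(C)$, the graph $C - c$ is an induced path on four vertices, so it has a perfect matching. Hence $x_C/x_c$ is a product of two edges of $G$, and $x_C \in I^2$. The argument then has three steps: the disjointness claim, the inclusion $\supseteq$, and the inclusion $\subseteq$. Since all ideals involved are monomial ideals, it suffices to check monomials throughout.

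\emph{Step 1 (disjointness).} Suppose $v \in \operatorname{supp}(g) \cap N_G[C]$; we show $f \in I^3$, which is a contradiction. If $v \in V(C)$, then $x_v^2\, x_C/x_v$ is divisible by $x_v x_u \cdot (x_C/(x_vx_u))$ for a cycle neighbour $u$ of $v$. Here $x_C/(x_vx_u)$ is a path on three vertices; together with the extra copy of $x_v$, the five remaining factors contain two disjoint edges of $C$, giving three edges in total. If instead $v \in N_G(c) \setminus V(C)$ for some $c \in V(C)$, then $x_v x_c \cdot (x_C/x_c) \in I \cdot I^2 = I^3$ divides $f$. Either way $f \in I^3$. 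The same computation with $g$ replaced by $x_j$ shows $x_j f \in I^3$ for every $j \in N_G[C]$.

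\emph{Step 2 ($\supseteq$).} Step 1 already gives $x_j \in I^3 : f$ for $j \in N_G[C]$. Now let $h$ be a monomial with $hg \in I(G - N_G[C])R \subseteq I$. Then $hf = x_C \cdot hg \in I^2 \cdot I = I^3$.

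\emph{Step 3 ($\subseteq$).} This is the main point. Let $h$ be a monomial with $hf \in I^3$, and assume no variable of $N_G[C]$ divides $h$; otherwise $h$ already lies in the right-hand side. Write $hf = e_1 e_2 e_3 w$ with $e_i$ edge monomials. By Step 1 and the choice of $h$, the only variables from $N_G[C]$ that appear in $hf = x_C\,hg$ are $x_1, \ldots, x_5$, each with exponent one. Consider an edge $e_i$ containing a vertex of $C$. Its other endpoint is in $N_G[C]$, so that endpoint must also be a vertex of $C$; thus $e_i$ is an edge of the induced cycle $C$. These edges are vertex-disjoint because each $x_v$ with $v \in V(C)$ has exponent one, and a matching in $C_5$ has at most $2$ edges. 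Hence at least one $e_i$ involves only variables of $hg$, whose support lies in $V(G) \setminus N_G[C]$. Therefore $e_i$ is an edge of $G - N_G[C]$ dividing $hg$, so $h \in I(G - N_G[C]) : g$.

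The only delicate step is the exponent bookkeeping in Step 3. It relies on Step 1 to guarantee that $g$ contributes no variables from $N_G[C]$, and on the oddness of $C_5$, which bounds the matching number of $C$ by $2$.
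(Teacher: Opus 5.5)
Your proof is correct and follows the same outline as the paper's: disjointness from $f\notin I^3$, then the inclusion $\supseteq$ via $x_C\in I^2$, then the reverse inclusion. Your Step~3 is more complete than the paper's version, which only checks variables $x_l\in I^3:f$; your bound that at most two of the three edges can meet $V(C)$ handles arbitrary monomials $h$, including non-variable generators such as edges of $G-N_G[C]$. The only blemish is cosmetic: in Step~1 it is cleaner to write $x_vx_C=(x_vx_u)\cdot(x_C/x_u)\in I\cdot I^2$, and your count of ``five remaining factors'' should be four.
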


\begin{proof}
Since $f \notin I^3$, we have $N_G[C] \cap \operatorname{supp}(g) = \emptyset$. The right-hand side is clearly contained in the left-hand side. Conversely, if $x_l \in I^3 : f$, then $N_G(l) \cap \operatorname{supp}(f) \neq \emptyset$. The conclusion follows.
\end{proof}

\begin{lemma}\label{lem_colon_t_e}
Let $G$ be a simple graph and let $T$ be a triangle in $G$. Assume that $f = x_T g$ with $g \in I$ is such that $I^3 : f$ is a prime ideal. Let $W = \operatorname{supp}(g)$, $W_T = W \cap T$, and $W_N = (W \cap N_G[T]) \setminus T$. Then $|W_T \cup W_N| \le 2$ and one of the following must hold:
\begin{enumerate}
    \item $W_T \cup W_N = \emptyset$ and $I^3 : f = (x_j \mid j \in N_G[T]) + I(G - N_G[T])^2 : g$;
    \item $W_T \cup W_N = \{i\}$ with $i \in W_N$. Writing $f = x_T x_i u$, we have $I^3 : f = (x_j \mid j \in N_G[T]) + I(G - N_G[T]) : u$;
    \item $W_T \cup W_N = \{p,q\}$, where $\{p,q\}$ is an edge of $G$. Writing $f = x_T x_p x_q h$, we have $I^3 : f = (x_j \mid j \in N_G[T \cup \{p,q\}]) + I(G - N_G[T \cup \{p,q\}]) : h$.
\end{enumerate}
\end{lemma}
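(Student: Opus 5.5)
We argue directly with monomials, in the same way as Lemmas~\ref{lem_colon_K_4} and \ref{lem_colon_c_5}. The guiding principle is that if $I^3:f$ is prime, then by Theorem~\ref{thm_ass_M} it is an embedded prime $P$ of $I^3$. We must have $f\notin I^3$, and for every variable $x_l\in P$ we need $x_lf\in I^3$, while $x_l\notin P$ forces $x_lf\notin I^3$. Throughout, we write $T=\{1,2,3\}$ and $f=x_1x_2x_3g$ with $g\in I$.

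\textbf{Step 1: bounding $|W_T\cup W_N|$.} Since $g\in I$, the monomial $f$ already contains $x_T$ times an edge. Suppose $W_T\cup W_N$ contains two vertices $p,q$ that do not form an edge. We then produce three disjoint edges inside $f$, contradicting $f\notin I^3$. If $p\in W_N$ is adjacent to $1$, pair $p$ with $1$ and use the edge $\{2,3\}$; this uses only $x_T x_p$. Together with an edge dividing $g$ that avoids the used copies, this gives an element of $I^3$. A similar rerouting handles $p\in W_T$, since $x_1^2x_2x_3$ times an edge of $g$ lies in $I^3$ whenever that edge meets $N[T]$. The delicate point is that the edge in $g$ may share the vertex $p$; this is where the case analysis lies. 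The conclusion should be the following: if $W_T\cup W_N$ has at least two elements that are not joined by an edge, or has at least three elements, then $f\in I^3$. Hence $|W_T\cup W_N|\le 2$, and when the size is $2$ the two vertices form an edge. When the size is $1$, the single vertex cannot lie in $W_T$: if $i\in T$ and $g\in I$, then $x_i^2$ together with the other triangle vertices again produces enough edges. These observations separate the three cases (1)--(3).

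\textbf{Step 2: computing the colon in each case.} In each case we factor $f=x_Ax_Bh$, where $x_A$ collects the variables from $T$ (and from $\{p,q\}$ in case~(3)), and $h$ is supported outside the relevant closed neighborhood. Then $I^3=\sum_k I(G[N])^k\,I(G-N[\cdot])^{3-k}$ modulo mixed edges. Since $h$ misses the closed neighborhood, a degree count shows that $x_Tx_i$ (respectively $x_Tx_px_q$, respectively $x_T$) contributes exactly $1$ (respectively $2$, respectively $1$) edge to any product representation. A variable $x_j$ with $j\in N[T]$ (or $j\in N[T\cup\{p,q\}]$) can always be absorbed. For example, in case~(1) the product $x_jx_T$ contains two disjoint edges. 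Together with the edges already present, this yields the membership $x_j\in I^3:f$.

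\textbf{Step 3: matching the remaining factor.} For monomials supported in $G-N[\cdot]$, membership in $I^3:f$ reduces to membership of the remaining part in the colon by $h$ of the appropriate power of $I(G-N[\cdot])$. This gives $I^2:g$ in case~(1), $I:u$ in case~(2), and $I:h$ in case~(3). It parallels the proof of Theorem~\ref{thm_v_2}, using that edges meeting $N[\cdot]$ cannot cover more than the counted degree.

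The main obstacle is the exhaustive exclusion in Step~1, namely showing that every configuration with $|W_T\cup W_N|\ge 2$ other than a single edge forces $f\in I^3$ or makes the colon non-prime. The plan is first to list the configurations by how many elements of $W$ lie in $T$ versus $N(T)\setminus T$. In each configuration we then either exhibit three disjoint edges explicitly, or show, as in Lemma~\ref{lem_colon_K_4}, that some vertex of $T$ is missing from $I^3:f$ while its neighbors are also missing, so the colon contains $I$ without corresponding to a vertex cover.
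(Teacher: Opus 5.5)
Your plan takes the same route as the paper: use $x_px_T\in I^2$ for $p\in N_G[T]$ to constrain $W\cap N_G[T]=W_T\cup W_N$, then compute the colon by counting edges. The gap is that the decisive step is never carried out. In Step~1 you stop at ``the conclusion should be'', leave the case where the edge of $g$ passes through $p$ as an unresolved ``delicate point'', and consider falling back on a primality argument as in Lemma~\ref{lem_colon_K_4}. The missing idea is a one-line observation that uses only $f\notin I^3$. For every $p\in W\cap N_G[T]$ we have $g/x_p\notin I$, since otherwise $f=(x_px_T)(g/x_p)\in I^2\cdot I=I^3$. So if $\{a,b\}$ is any edge with $x_ax_b\mid g$, every $p\in W\cap N_G[T]$ lies in $\{a,b\}$ (else $x_ax_b\mid g/x_p$) and occurs in $g$ with exponent one. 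This gives all three conclusions at once:
\begin{itemize}
\item $|W_T\cup W_N|\le 2$.
\item If $W\cap N_G[T]=\{p,q\}$, then $\{a,b\}=\{p,q\}$ is an edge, and $x_ph=g/x_q\notin I$, $x_qh=g/x_p\notin I$.
\item If $W\cap N_G[T]=\{i\}$, then $i$ has a neighbour in $W\setminus N_G[T]$. This is impossible for $i\in T$, because $N_G(t)\subseteq N_G[T]$ for $t\in T$.
\end{itemize}
Your remark that $x_1^2x_2x_3$ times an edge of $g$ lies in $I^3$ ``whenever that edge meets $N[T]$'' tests the wrong condition. What matters is whether the edge avoids $p$.

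In Step~2, the claim that $x_Tx_i$ contributes ``exactly $1$'' edge to any product representation is false. Take $t\in N_G(i)\cap T$ and $T=\{t,t',t''\}$; then $(x_ix_t)(x_{t'}x_{t''})$ exhibits two edges, and two is exactly what $I(G-N_G[T]):u\subseteq I^3:f$ requires. Since representations are not unique, only an upper bound is meaningful. More seriously, the reverse inclusion is precisely where the mixed edges you set aside ``modulo mixed edges'' must be controlled.
\begin{itemize}
\item Case~(2), $k$ supported outside $N_G[T]$: $x_1,x_2,x_3,x_i$ each occur once in $kf$, and vertices of $T$ have no neighbours outside $N_G[T]$. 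Hence at most two of three edges whose product divides $kf$ can touch $T\cup\{i\}$, so the third divides $uk$.
\item Case~(3): edges from $p$ or $q$ to $\operatorname{supp}(h)$ are excluded by $x_ph,x_qh\notin I$, the output of the corrected Step~1. Edges to $\operatorname{supp}(k)$ are excluded by $\operatorname{supp}(k)\cap N_G[T\cup\{p,q\}]=\emptyset$. Only then are all edges meeting $T\cup\{p,q\}$ confined to the degree-$5$ factor $x_Tx_px_q$, which supports at most two of them, forcing $hk\in I$.
\end{itemize}
Without these points, Steps~2--3 remain assertions rather than a proof.
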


\begin{proof} 
If $|W_T \cup W_N| \ge 3$, say $i, j, k \in W_T \cup W_N$, then there exists at least one variable, say $x_i$, such that $g / x_i \in I$ since $g \in I$. Since $x_i x_T \in I^2$, this implies that $f = (x_i x_T) (g / x_i) \in I^3$, a contradiction. Thus, $|W_T \cup W_N| \le 2$.

Now assume that $W_T \cup W_N = \emptyset$. Then $\operatorname{supp}(g) \subseteq V(G) \setminus N_G[T]$. Clearly, $I^3 : f \supseteq (x_j \mid j \in N_G[T]) + (I(G - N_G[T])^2 : g)$. Conversely, let $h$ be a monomial such that $\operatorname{supp}(h) \subseteq V(G) \setminus N_G[T]$ and $h f \in I^3$. Then the three edges involved in a product representation of $h f$ must include at least two edges whose supports lie entirely in $V(G) \setminus N_G[T]$, as there are no edges between $T$ and $V(G) \setminus N_G[T]$.

Next, assume that $W_T \cup W_N = \{i\}$. Write $f = x_T x_i u$. Then $\operatorname{supp}(u)$ is an independent set since $x_i x_T \in I^2$ and $\operatorname{supp}(u) \subseteq V(G) \setminus N_G[T]$. Since $x_i u \in I$, we have $i \in W_N$. We claim that $I^3 : f = (x_j \mid j \in N_G[T]) + (I(G - N_G[T]) : u)$. The inclusion $(\supseteq)$ is clear. Now let $h$ be a monomial such that $h \in I^3 : f$ and $\operatorname{supp}(h) \subseteq V(G) \setminus N_G[T]$. Since $h f \in I^3$, there exist three edges whose product divides $h f$. Because $x_i$ appears with multiplicity $1$, it can be used in at most one edge. At most one edge can lie in $T$. Hence, the remaining edge must divide $h u$, so $h u \in I$. The conclusion follows.

Finally, assume that $W_T \cup W_N = \{p,q\}$. Write $f = x_T x_p x_q h$. Then $x_p h \notin I$ and $x_q h \notin I$, for otherwise $x_T x_q \in I^2$ would imply $f \in I^3$. Since $x_p x_q h \in I$, $\{p,q\}$ must be an edge of $G$. Clearly, $I^3 : f \supseteq (x_j \mid j \in N_G[T \cup \{p,q\}]) + (I(G - N_G[T \cup \{p,q\}]) : h)$. Now let $k$ be any minimal generator of $I^3 : f$ with $\operatorname{supp}(k) \cap N_G[T \cup \{p,q\}] = \emptyset$. Since $k$ is minimal, an element of $\operatorname{supp}(k)$ must be used in one of the edges in a product dividing $k f$, which can only connect to $\operatorname{supp}(h)$. The conclusion follows.
\end{proof}

\begin{theorem}\label{thm_v_3} 
Let $G$ be a simple graph. Set 
\begin{align*}
    A &= 3 + \min \{ v(I(G - N_G[T])^2) \mid T \text{ is a triangle in } G \text{ and } V(G) \setminus N_G[T] \text{ contains an edge} \}, \\
    B &= 5 + \min \{ v(I(G - N_G[T \cup \{s\}])) \mid T \text{ is a triangle in } G \text{ and } \operatorname{dist}_G(T,s) = 2 \}, \\
    C &= 5 + \min \{ v(I(G - N_G[C_5])) \mid C_5 \text{ is an induced } 5\text{-cycle in } G \}.
\end{align*}
Then 
$$\v(M_3(I(G))) = \min \{ A, B, C \}.$$
\end{theorem}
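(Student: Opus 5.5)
By Theorem~\ref{thm_ass_M} and Theorem~\ref{thm_v_M}, $\v(M_3(I(G)))$ equals the minimum of $\deg f$ over monomials $f$ such that $I^3 : f = P$ is an embedded prime of $I^3$. Here $I = I(G)$. Such an $f$ lies in $I^{(3)} \setminus I^3$.

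The plan is to classify these $f$ using the structure of $I^{(3)}$ for edge ideals. Monomials in $I^{(3)}$ are products of edges, triangles, and cliques $K_4$ or induced $5$-cycles: by the description of $I(G)^{(3)}$ as generated by $I^3$, $x_T I$ for triangles $T$, $x_K$ for $K_4$'s, and $x_{C_5}$ for induced $5$-cycles. Hence $f \in I^{(3)} \setminus I^3$ is divisible by one of $x_K$, $x_{C_5}$, or $x_T g$ with $g \in I$. We split into these three cases and compute $\deg f$ in each.

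\textbf{Case $K_4$.} Suppose $x_K \mid f$. If some neighbour of $K$ lies in the support of the cofactor, then $f$ is also divisible by $x_T g$ for a triangle $T \subset K$ and an edge $g$ (the fourth vertex together with that neighbour), so $f$ falls into the triangle case. Otherwise Lemma~\ref{lem_colon_K_4} shows $I^3 : f$ is not prime, so this case contributes nothing new.

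\textbf{Case $C_5$.} Suppose $x_{C_5} \mid f$ and no triangle-type factor divides $f$. Lemma~\ref{lem_colon_c_5} gives
\[
I^3:f=(x_j\mid j\in N_G[C_5])+I(G-N_G[C_5]):g.
\]
This is prime exactly when $I(G-N_G[C_5]):g$ is an associated prime of $I(G-N_G[C_5])$, or $g=1$ with the convention $\v=0$. Minimizing over $g$ yields the term $C$.

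\textbf{Case triangle.} Suppose $x_T g \mid f$ with $g \in I$. We apply Lemma~\ref{lem_colon_t_e} and treat its three subcases.
\begin{enumerate}
\item In subcase (1), primality of $I^3:f$ is equivalent to $I(G-N_G[T])^2:g$ being an associated prime of $I(G-N_G[T])^2$. Since $g \in I$, this requires an edge outside $N_G[T]$. Its minimal degree is $\v(I(G-N_G[T])^2)$, giving the term $A$.
\item In subcase (2), $f = x_T x_i u$ with $i \in N_G(T)$ and $x_i u \in I$. Then $u$ contains a vertex $s$ adjacent to $i$, and $s \notin N_G[T]$, so $\operatorname{dist}(T,s) = 2$. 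Write $u = x_s u'$; then $I(G-N_G[T]):u$ absorbs $N_G(s)$ as variables. The colon is therefore $(x_j \mid j \in N_G[T \cup \{s\}]) + I(G-N_G[T\cup\{s\}]):u'$, with degree $5 + \deg u'$. This yields the term $B$.
\item In subcase (3), $f = x_T x_p x_q h$ with $\{p,q\}$ an edge meeting $N_G[T]$. If $p \in N_G(T)$ and $q \notin N_G[T]$, then $\operatorname{dist}(T,q) = 2$. Also $N_G[T \cup \{p,q\}] \supseteq N_G[T \cup \{q\}]$, and the extra vertices of $N_G(p)$ only enlarge the variable part. One checks that this degree is at least the corresponding $B$-value, either by deleting $p$ or by comparing with subcase (2). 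If instead $p, q \in N_G(T) \cup T$, the degree is at least that of a configuration already counted, or is again reducible to $B$.
\end{enumerate}

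\textbf{Converse.} Conversely, for each term we build an explicit $f$ attaining the value. For $A$, take $f = x_T g$, where $g$ realizes $\v(I(G-N_G[T])^2)$. For $B$, take $f = x_T x_i x_s u'$, where $i$ is a common neighbour of $T$ and $s$. For $C$, take $f = x_{C_5} g$. In each case we must verify $f \notin I^3$. This follows from the independence/associated-prime conditions on $g$: for instance, $x_s u'$ has no edges inside, and $u'$ avoids $N_G[T\cup\{s\}]$.

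\textbf{Main obstacle.} The hardest part is the bookkeeping in the triangle subcases (2) and (3): showing that subcase (3) never beats $B$ or $A$, and that $K_4$-containing monomials reduce to the triangle case. For subcase (3) I would first try the direct inequality
\[
\v\bigl(I(G-N_G[T\cup\{p,q\}])\bigr)+2 \;\ge\; \v\bigl(I(G-N_G[T\cup\{q\}])\bigr),
\]
using monotonicity of the $\v$-number under deleting a closed neighbourhood. If that fails, I would instead extract a witness for $B$ from the associated-prime witness $h$.
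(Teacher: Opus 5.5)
Your reduction and case split match the paper's proof. You use Theorem~\ref{thm_v_M} and the generators of $I^{(3)}$, Lemma~\ref{lem_colon_K_4} to discard $K_4$'s, Lemma~\ref{lem_colon_c_5} for $C$, and subcases (1) and (2) of Lemma~\ref{lem_colon_t_e} for $A$ and $B$. One small slip in (2): the variable part is $N_G[T]\cup N_G(s)$, not $N_G[T\cup\{s\}]$, since $x_s\notin I^3:f$; the degree count is unaffected.

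The genuine gap is subcase (3), and it cannot be closed. First, you misread that subcase. There $\{p,q\}=W\cap N_G[T]$, so both $p$ and $q$ lie in $N_G[T]$. Your configuration with $q\notin N_G[T]$ is really subcase (2), so the inequality you propose to prove concerns a case that does not occur. In the actual subcase (3), $\operatorname{supp}(h)$ is an independent set avoiding $N_G[T\cup\{p,q\}]$, and the colon is $(x_j\mid j\in N_G[T\cup\{p,q\}])+I(G-N_G[T\cup\{p,q\}]):h$. Conversely, take any triangle $T$, any edge $e\subseteq N_G[T]$, and any $h$ realizing $\v(I(G-N_G[T\cup e]))$. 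Then $f=x_Tx_eh$ lies in $I^{(3)}$ and $I^3:f$ is prime. So this subcase contributes exactly
\[
D=5+\min\{\v(I(G-N_G[T\cup e]))\mid T \text{ a triangle},\ e\in E(G),\ e\subseteq N_G[T]\},
\]
and nothing bounds $D$ below by $\min\{A,B,C\}$.

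Concretely, take $G=K_3$. The index sets defining $A$, $B$ and $C$ are all empty: $N_G[T]=V(G)$, so there is no edge outside $N_G[T]$ and no vertex at distance $2$, and there is no $C_5$. Yet $f=x_1^2x_2^2x_3=x_T\cdot x_1x_2$ lies in $I^{(3)}\setminus I^3$ and satisfies $I^3:f=(x_1,x_2,x_3)$. Moreover, every monomial of $I^{(3)}$ has degree at least $5$. Hence $\v(M_3(I(K_3)))=5=D$.

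So your claim that these configurations are already counted or reducible to $B$ is false, and the theorem as stated fails. The correct formula is $\v(M_3(I(G)))=\min\{A,B,C,D\}$. Your case analysis, together with the converse constructions (including $f=x_Tx_eh$ for $D$), proves it once $D$ is added. The paper's own proof simply cites Lemma~\ref{lem_colon_t_e} and never accounts for its case (3), so it has the same omission.
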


\begin{proof}
By Theorem~\ref{thm_v_M}, we have 
$$\v(M_3(I(G))) = \min \{ \deg(f) \mid f \in I^{(3)} \setminus I^3 \text{ and } I^3 : f \text{ is prime} \}.$$
By \cite[Theorem 2.10]{mnptv22}, $f$ takes one of the following forms: $f = x_T g$ with $g \in I$ and $T$ a triangle in $G$; $f = x_K g$ with $K$ an induced $K_4$ in $G$; or $f = x_C g$ with $C$ an induced $C_5$ in $G$. By Lemma~\ref{lem_colon_K_4}, if $f = x_K g$, then $N_G(K) \cap \operatorname{supp}(g) \neq \emptyset$, which can be rewritten in the first form. The conclusion then follows from Lemma~\ref{lem_colon_c_5} and Lemma~\ref{lem_colon_t_e}.
\end{proof}

\bibliographystyle{plain}
\bibliography{reference}
\end{document}